\def\bN {\mathbf{N}}
\def\bR {\mathbf{R}}
\def\bV {\mathbf{V}}
\def\fH {\mathfrak{H}}
\def\cB {\mathcal{B}}
\def\cC {\mathcal{C}}
\def\cD {\mathcal{D}}
\def\cE {\mathcal{E}}
\def\cF {\mathcal{F}}
\def\cG {\mathcal{G}}
\def\cH {\mathcal{H}}
\def\cL {\mathcal{L}}
\def\cS {\mathcal{S}}
\def\cU {\mathcal{U}}
\def\a {{\alpha}}
\def\g {{\gamma}}
\def\Ga {{\Gamma}}
\def\de {{\delta}}
\def\eps {{\epsilon}}
\def\th {{\theta}}
\def\l {{\lambda}}
\def\L {{\Lambda}}
\def\si {{\sigma}}
\def\om {{\omega}}
\def\Om {{\Omega}}
\def\d {{\partial}}
\def\grad {{\nabla}}
\def\Dlt {{\Delta}}
\def\rstr {{\big |}}
\def\la {\langle}
\def\ra {\rangle}
\def\dd {\,\mathrm{d}}
\newcommand{\Tr}{\operatorname{trace}}
\newcommand{\Dist}{\operatorname{dist}}
\newcommand{\Lip}{\operatorname{Lip}}
\newcommand{\MKd}{\operatorname{dist_{MK,2}}}
\newcommand{\Op}{\operatorname{OP}}
\def\hb {{\hbar}}
\newcommand{\ba}{\begin{aligned}}
\newcommand{\ea}{\end{aligned}}
\newcommand{\be}{\begin{equation}}
\newcommand{\ee}{\end{equation}}
\newcommand{\lb}{\label}
\newtheorem{Thm}{Theorem}[section]
\newtheorem{Prop}[Thm]{Proposition}
\newtheorem{Cor}[Thm]{Corollary}
\newtheorem{Lem}[Thm]{Lemma}
\newtheorem{Def}[Thm]{Definition}
\begin{document}

\title[Time Splitting for Quantum Dynamics]{On the Convergence of Time Splitting Methods\\ for Quantum Dynamics\\ in the Semiclassical Regime}

\author[F. Golse]{Fran\c cois Golse}
\address[F.G.]{CMLS, \'Ecole polytechnique, 91128 Palaiseau Cedex, France}
\email{francois.golse@polytechnique.edu}

\author[S. Jin]{Shi Jin}
\address[S.J.]{School of Mathematical Sciences, Institute of Natural Sciences, MOE-LSC,  Shanghai Jiao Tong University, Shanghai 200240, China}
\email{shijin-m@sjtu.edu.cn}

\author[T. Paul]{Thierry Paul}
\address[T.P.]{CMLS, \'Ecole polytechnique \& CNRS, 91128 Palaiseau Cedex, France}
\email{thierry.paul@polytechnique.edu}

\begin{abstract}
By using the pseudo-metric introduced in [F. Golse, T. Paul: Archive for Rational Mech. Anal. \textbf{223} (2017) 57--94], which is an analogue of the Wasserstein distance of exponent $2$ between a quantum density operator and a classical 
(phase-space) density, we prove that the convergence of time splitting algorithms for the von Neumann equation of quantum dynamics is uniform in the Planck constant $\hbar$. We obtain explicit uniform in $\hbar$ error estimates for the first 
order Lie-Trotter, and the second order Strang splitting methods.
\end{abstract}

\keywords{Evolutionary equations, Time-dependent Schr\"odinger equations, Exponential operator splitting methods, Wasserstein distance}

\subjclass{65L05, 65M12, 65J10, 81C05}

\date{\today}

\maketitle

%%%%%%%%%%%%%%%%%%%%%%%%%%%%%%%%%%%%%%%%%%%%%%%%%%%%%%%%%%%%%%%%%%%%%%%%%%%%%%%%%%%%%%%%%%%%%%%%%%%%%%%%%%%%%%%%%%%%%%%%%

\section{Introduction}

%%%%%%%%%%%%%%%%%%%%%%%%%%%%%%%%%%%%%%%%%%%%%%%%%%%%%%%%%%%%%%%%%%%%%%%%%%%%%%%%%%%%%%%%%%%%%%%%%%%%%%%%%%%%%%%%%%%%%%%%%

One of the main challenges in quantum dynamics and high frequency waves is that one needs to numerically resolve the small wave length which is computationally prohibitive \cite{Baderetal,ER,JMS,Hochbrucketal}. When  a numerical method
is developed one would like to know its mesh strategy, namely, the dependence of the time step and mesh size on the wave length $\hbar$ (for a misuse of notation in this article we will not distinguish the difference between the reduced Planck 
constant $\hbar$ and the wave length). 

Finite difference schemes for the Schr\"odinger equation typically require both time step and mesh size in the semiclassical regime (i.e. for $\hbar\ll 1$) to be of order $O(\hbar)$ (see \cite{MPP}), or even $o(\hbar)$. On the other hand, the time 
splitting spectral method can improve the time step to be of order $O(1)$ if only the physical observables are of interest \cite{BJM}. An important mathematical object to understand these mesh strategies is the Wigner transform \cite{Wigner}, 
which is a convenient tool to study the semiclassical limit of the Schr\"odinger equation \cite{GMMP, LP}. In fact, the mesh strategy of $\Delta t=O(1)$, for the time step $\Delta t$, of the time-splitting spectral method can only be understood in 
the Wigner framework, and not in terms of the wave function \cite{BJM}.

Since the solution to the Schr\"odinger equation is oscillatory with wave length of order $\hbar$, if one  uses a standard metric, such as the $L^2$ or Sobolev norm, one would end up with an numerical error of order $O((\Delta t/\hbar)^m)$ 
for some integer $m$ which depends on the order of the method. This will not allow one to see an $\hbar$ independent mesh strategy. The argument of an $\hbar$ independent time-step strategy in \cite{BJM} for the time splitting discretization 
to the linear Schr\"odinger equation, which was also useful in establishing a similar mesh strategy for the nonlinear Erhenfest dynamics \cite{FJS}, was made at a formal level without quantifying the numerical error.One would be interested 
in finding a suitable metric which allows one to establish  such a mesh strategy at the {\it rigorous} level.  In the present paper, we use the pseudo-metric  introduced in \cite{FGTPaulARMA}  to establish a uniform (in $\hbar$) error estimate 
of the time splitting methods for the von Neumann equation (which describes the evolution of mixed quantum states, and reduces to the Schr\"odinger equation in the case of pure quantum states \cite{Breueretal}) in the semiclassical regime.
 
%%%%%%%%%%%%%%%%%%%%%%%%%%%%%%%%%%%%%%%%%%%%%%%%%%%%%%%%%%%%%%%%%%%%%%%%%%%%%%%%%%%%%%%%%%%%%%%%%%%%%%%%%%%%%%%%%%%%%%%%%

\section{A Pseudo-Metric for the Classical Limit}

%%%%%%%%%%%%%%%%%%%%%%%%%%%%%%%%%%%%%%%%%%%%%%%%%%%%%%%%%%%%%%%%%%%%%%%%%%%%%%%%%%%%%%%%%%%%%%%%%%%%%%%%%%%%%%%%%%%%%%%%%

\begin{Def} A density operator on $\fH:=L^2(\bR^d)$ is an operator $R\in\cL(\fH)$ such that
$$
R=R^*\ge 0\,,\quad\Tr_\fH(R)=1\,.
$$
The set of all density operators on $\fH$ will be denoted by $\cD(\fH)$. 
\end{Def}

In the definition above, the notation $\cL(\fH)$ designates the algebra of bounded linear operators defined on $\fH$. Henceforth, we also denote by $\cL^p(\fH)$ for all $p\ge 1$ the two-sided ideal of $\cL(\fH)$ whose elements are the operators 
$T\in\cL(\fH)$ such that $|T|^p=(T^*T)^{p/2}$ is a trace-class operator on $\fH$. For instance $\cL^1(\fH)$ and $\cL^2(\fH)$ are respectively the sets of trace-class and of Hilbert-Schmidt operators on $\fH$. The notation $\Tr_\fH(T)$ designates
the trace of $T\in\cL^1(\fH)$.

We denote by $\cD^2(\fH)$ the set of density operators on $\fH$ such that
\be\lb{FiniteEnerg}
\Tr_\fH(R^{1/2}(-\hbar^2\Dlt_y+|y|^2)R^{1/2})<\infty\,.
\ee
If $R\in\cD^2(\fH)$, one has
\be\lb{FiniteEnerg2}
\Tr_\fH((-\hbar^2\Dlt_y+|y|^2)^{1/2}R(-\hbar^2\Dlt_y+|y|^2)^{1/2})=\Tr_\fH(R^{1/2}(-\hbar^2\Dlt_y+|y|^2)R^{1/2})<\infty
\ee
as can be seen from the lemma below (applied to $A=\l^2|y|^2-\hbar^2\Dlt_y$ and $T=R$).

\begin{Lem}\lb{L-FiniteEnerg}
Let $T\in\cL(\fH)$ satisfy $T=T^*\ge 0$, and let $A$ be an unbounded operator on $\fH$ such that $A=A^*\ge 0$. Then 
$$
\Tr_\fH(T^{1/2}AT^{1/2})=\Tr_\fH(A^{1/2}TA^{1/2})\in[0,+\infty]\,.
$$
\end{Lem}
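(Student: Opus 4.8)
The plan is to reduce the identity to a statement about positive operators and a common spectral resolution argument, handling the subtlety that $A$ is unbounded and the traces may be infinite.

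First I would treat the case where $A$ is bounded. Then both $T^{1/2}AT^{1/2}$ and $A^{1/2}TA^{1/2}$ are bona fide bounded nonnegative operators, and the claimed equality is the cyclicity of the trace in the form $\Tr_\fH(BB^*)=\Tr_\fH(B^*B)$ with $B=T^{1/2}A^{1/2}$: indeed $BB^*=T^{1/2}A^{1/2}(A^{1/2})^*(T^{1/2})^*=T^{1/2}AT^{1/2}$ and $B^*B=A^{1/2}TA^{1/2}$, using $A=A^*\ge 0$ and $T=T^*\ge 0$ so that $A^{1/2},T^{1/2}$ are selfadjoint. One must recall that $\Tr_\fH(BB^*)=\Tr_\fH(B^*B)\in[0,+\infty]$ holds for \emph{any} bounded operator $B$ when the trace is understood as $\sum_n\la e_n,\cdot\,e_n\ra$ over an orthonormal basis $(e_n)$ of $\fH$ for a nonnegative operator; this is Parseval applied to $\sum_{n}\|B^*e_n\|^2=\sum_{n,m}|\la e_m,B^*e_n\ra|^2=\sum_{n,m}|\la e_n,Be_m\ra|^2=\sum_m\|Be_m\|^2$, and is independent of the chosen basis.

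For unbounded $A=A^*\ge 0$, I would pass to the bounded case by spectral truncation. Let $A_k:=A\,\indc_{[0,k]}(A)$ be the bounded truncations obtained from the spectral measure of $A$; then $0\le A_k\le A_{k+1}$ and $A_k\uparrow A$ in the strong resolvent sense, with $A_k^{1/2}\uparrow A^{1/2}$ strongly. Applying the bounded case gives $\Tr_\fH(T^{1/2}A_kT^{1/2})=\Tr_\fH(A_k^{1/2}TA_k^{1/2})$ for every $k$. On the left-hand side, $T^{1/2}A_kT^{1/2}$ is an increasing sequence of nonnegative operators, so by the monotone convergence theorem for traces (Fatou/Beppo Levi applied termwise in an orthonormal basis) its trace converges to $\sup_k\Tr_\fH(T^{1/2}A_kT^{1/2})$, which one identifies with $\Tr_\fH(T^{1/2}AT^{1/2})\in[0,+\infty]$ — the latter quantity being by definition $\sup$ over such truncations, or equivalently $\sum_n\la A^{1/2}T^{1/2}e_n,A^{1/2}T^{1/2}e_n\ra$ once one checks $T^{1/2}e_n\in\Dom(A^{1/2})$ when the sum is finite. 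The same monotone argument applies to the right-hand side with $A_k^{1/2}TA_k^{1/2}\uparrow$, giving the limit $\Tr_\fH(A^{1/2}TA^{1/2})$. Passing to the limit in the equality for each $k$ yields the desired identity in $[0,+\infty]$.

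The main obstacle is purely a matter of care with domains and the precise meaning of $\Tr_\fH(T^{1/2}AT^{1/2})$ and $\Tr_\fH(A^{1/2}TA^{1/2})$ when $A$ is unbounded: one must fix the convention that these denote $\sum_n\la e_n, T^{1/2}AT^{1/2}e_n\ra$ and $\sum_n\la e_n,A^{1/2}TA^{1/2}e_n\ra$ respectively, with summands in $[0,+\infty]$, interpreting $\la e_n,T^{1/2}AT^{1/2}e_n\ra$ as $\|A^{1/2}T^{1/2}e_n\|^2$ (finite iff $T^{1/2}e_n\in\Dom(A^{1/2})$, and $+\infty$ otherwise), and check that the truncation $A_k$ is monotone-compatible with this convention so that the bounded-case identity survives the limit. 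Once the convention is pinned down, the argument is the two-line cyclicity computation plus monotone convergence, and nothing deeper is needed.
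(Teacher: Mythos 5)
Your bounded-case step is fine (it is just $\Tr_\fH(BB^*)=\Tr_\fH(B^*B)$ with $B=T^{1/2}A^{1/2}$), and the truncation argument does work on the \emph{left}-hand side, because $A_k\le A_{k+1}$ makes $\la T^{1/2}e_n,A_kT^{1/2}e_n\ra$ nondecreasing termwise. The genuine gap is the sentence ``the same monotone argument applies to the right-hand side with $A_k^{1/2}TA_k^{1/2}\uparrow$'': conjugation is not monotone in the outer factors, i.e.\ $A_k^{1/2}\le A_{k+1}^{1/2}$ does \emph{not} imply $A_k^{1/2}TA_k^{1/2}\le A_{k+1}^{1/2}TA_{k+1}^{1/2}$. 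A rank-one example already kills it: let $Av_1=v_1$, $Av_2=(1.1)^2v_2$, $T=|u\ra\la u|$ with $u=v_1-v_2$, $\psi=v_1+v_2$; truncating at $k=1$ gives $\la\psi,A_1^{1/2}TA_1^{1/2}\psi\ra=|\la u,v_1\ra|^2=1$, while without truncation one gets $|\la u,v_1+1.1\,v_2\ra|^2=0.01$, so the quadratic forms can decrease. Consequently termwise monotone convergence in a basis is unavailable on the right; worse, if $e_n\notin\Dom(A^{1/2})$ the limit of $\|T^{1/2}A_k^{1/2}e_n\|^2$ can be finite (e.g.\ when $T^{1/2}$ kills the spectral tail of $e_n$) even though your convention assigns the value $+\infty$ to $\la e_n,A^{1/2}TA^{1/2}e_n\ra$, so $\lim_k\Tr_\fH(A_k^{1/2}TA_k^{1/2})$ is simply not identified with $\Tr_\fH(A^{1/2}TA^{1/2})$ by the argument you give. (The identification would go through if $A$ had an orthonormal eigenbasis, by computing all the traces in that basis; but the lemma is stated for a general unbounded $A=A^*\ge0$, possibly with continuous spectrum, and Fatou only yields one inequality.)

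For comparison, the paper avoids the limit exchange altogether by a case split on the value of $\Tr_\fH(T^{1/2}AT^{1/2})$: if it is finite, then $A^{1/2}T^{1/2}\in\cL^2(\fH)$ and the equality is exactly the Hilbert--Schmidt identity $\Tr_\fH(CC^*)=\Tr_\fH(C^*C)$ (Kato, Ch.~X, \S1, formula (1.26)); if it is infinite, a short contradiction argument shows the other trace must also be infinite, since otherwise $T^{1/2}A^{1/2}$ and its adjoint would be Hilbert--Schmidt, forcing $T^{1/2}AT^{1/2}\in\cL^1(\fH)$. If you want to keep your truncation scheme, you would have to prove the missing identification of the right-hand limit, and the natural way to do that is precisely to pass through the Hilbert--Schmidt property of $A^{1/2}T^{1/2}$ (or $T^{1/2}A^{1/2}$), i.e.\ to reproduce the paper's argument; as written, your proof does not establish the stated equality.
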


\begin{proof}
The definition of $T^{1/2}$ and $A^{1/2}$ can be found in Theorem 3.35 in chapter V, \S 3 of \cite{Kato}, together with the fact that $A^{1/2}$ and $T^{1/2}$ are self-adjoint. 

If $\Tr_\fH(T^{1/2}AT^{1/2})<\infty$, then $A^{1/2}T^{1/2}\in\cL^2(\fH)$ and the equality holds by formula (1.26) in chapter X, \S 1 of \cite{Kato}.

If $\Tr_\fH(T^{1/2}AT^{1/2})=\infty$, then $\Tr_\fH(A^{1/2}TA^{1/2})=+\infty$, for otherwise $T^{1/2}A^{1/2}$ and its adjoint $A^{1/2}T^{1/2}$ would belong to $\cL^2(\fH)$, so that $T^{1/2}AT^{1/2}\in\cL^1(\fH)$, which would be in contradiction 
with the assumption that $\Tr_\fH(T^{1/2}AT^{1/2})=\infty$.
\end{proof}

Let $f\equiv f(x,\xi)$ be a probability density on $\bR^d\times\bR^d$ such that
\be\lb{2ndMom}
\iint_{\bR^d\times\bR^d}(|x|^2+|\xi|^2)f(x,\xi)dxd\xi<\infty\,.
\ee

\begin{Def} Let $f\equiv f(x,\xi)$ be a probability density on $\bR^{2d}$ and let $R\in\cD(\fH)$. A coupling of $f$ and $R$ is a measurable operator-valued function $(x,\xi)\mapsto Q(x,\xi)$ such that, for a.e. $(x,\xi)\in\bR^d\times\bR^d$,
$$
Q(x,\xi)=Q(x,\xi)^*\ge 0\,,\quad\Tr_\fH(Q(x,\xi))=f(x,\xi)\,,\quad\iint_{\bR^d\times\bR^d}Q(x,\xi)dxd\xi=R\,.
$$
\end{Def}

The second condition above implies that $Q(x,\xi)\in\cL^1(\fH)$ for a.e. $(x,\xi)\in\bR^d\times\bR^d$. Since $\cL^1(\fH)$ is separable, the notion of strong and weak measurability are equivalent for $Q$. The set of couplings of $f$ and $R$ is
denoted by $\cC(f,R)$. Notice that the operator-valued function $(x,\xi)\mapsto f(x,\xi)R$ belongs to $\cC(f,R)$.

In \cite{FGTPaulARMA}, one considers the following ``pseudometric'': for each probability density $f$ on $\bR^d\times\bR^d$ and each $R\in\cD^2(\fH)$,
$$
E_{\hbar}(f,R):=\inf_{Q\in\cC(f,R)}\left(\iint_{\bR^d\times\bR^d}\Tr_\fH(Q(x,\xi)^{1/2}c(x,\xi,y,\hbar D_y)Q(x,\xi)^{1/2})dxd\xi\right)^{1/2}\,,
$$
where the quantum transportation cost is the quadratic differential operator in $y$, parametrized by $(x,\xi)\in\bR^d\times\bR^d$:
$$
c(x,\xi,y,\hbar D_y):=|x-y|^2+|\xi-\hbar D_y|^2\,,\quad D_y:=-i\grad_y\,.
$$

Let $R\in\cD(\fH)$. The Wigner transform of $R$ is 
$$
W_{\hbar}(R)(x,\xi):=\tfrac1{(2\pi)^d}\int_{\bR^d}r(x+\tfrac12\hbar y,x-\tfrac12\hbar y)e^{-i\xi\cdot y}dy\,,
$$
where $r\equiv r(x,y)$ is the integral kernel of $R$. It is a well known fact that $W_{\hbar}(R)$ is real-valued (since $R=R^*$). It is also well known that $W_{\hbar}(R)$ is not necessarily nonnegative a.e. on $\bR^d\times\bR^d$. For instance, 
if $r(X,Y)=\psi(X)\overline{\psi(Y)}$ with $\psi$ odd, one has
$$
W_{\hbar}(R)(0,0)=-\tfrac1{(2\pi)^d}\int_{\bR^d}|\psi(\tfrac12\hbar y)|^2dy<0\,.
$$
The Husimi transform of $R$ henceforth denoted $\tilde W_{\hbar}(R)$ is defined in terms of its Wigner transform by the formula
$$
\tilde W_{\hbar}(R):=e^{\hbar\Dlt_{x,\xi}/4}W_{\hbar}(R)\,.
$$

Finally, we recall the definition of a T\"oplitz operator. The family of Schr\"odinger coherent states is
$$
|q,p\ra(x):=(\pi\hbar)^{-d/4}e^{-|x-q|^2/2\hbar}e^{ip\cdot(x-q/2)/\hbar}\,,\qquad x,q,p\in\bR^d\,.
$$ 
Let $\mu$ be a positive Borel measure on $\bR^d\times\bR^d$; the T\"oplitz operator with symbol $\mu$ is
$$
\Op^T_{\hbar}(\mu):=\tfrac1{(2\pi\hbar)^d}\int_{\bR^d\times\bR^d}|q,p\ra\la q,p|\mu(dqdp)\,.
$$
One easily checks that, if $\mu$ is the Lebesgue measure (denoted by $1$), then 
$$
\Op^T_{\hbar}(1)=I\,.
$$
Moreover, one easily checks that, if $\mu$ is a Borel probability measure on $\bR^d\times\bR^d$, then $\Op^T_{\hbar}((2\pi\hbar)^d\mu)\in\cD(\fH)$. In addition, if $\mu$ has finite second order moment as in \eqref{2ndMom}, then one has
$\Op^T_{\hbar}((2\pi\hbar)^d\mu)\in\cD^2(\fH)$.

\smallskip
The pseudo-metric $E_{\hbar}$ satisfies the following fundamental properties. Henceforth, we denote by $\MKd$ the Monge-Kantorovich or Wasserstein distance with exponent $2$ defined on the set of Borel probability measures satisfying
the finite second order moment condition \eqref{2ndMom} (see chapter 7 in \cite{VillaniAMS}), whose definition is recalled below.

\begin{Def}\lb{D-MKd}
For all $\rho$ and $\rho'$, Borel probability measures on $\bR^{2d}$, we set
$$
\MKd(\rho,\rho'):=\inf_{\pi\in\Pi(\rho,\rho')}\left(\int_{\bR^{2d}}(|q-q'|^2+|p-p'|^2)\pi(dqdpdq'dp')\right)^{1/2}\,,
$$
where $\Pi(\rho,\rho')$ designates the set of couplings of $\rho$ and $\rho'$. More precisely, $\Pi(\rho,\rho')$ is the set of Borel probability measures on $\bR^{2d}\times\bR^{2d}$ with first and second marginals $\rho$ and $\rho'$ resp., i.e. 
such that
$$
\ba
\int_{\bR^{2d}\times\bR^{2d}}&(\phi(q,p)+\phi'(q',p'))\pi(dqdpdq'dp')
\\
&=\int_{\bR^{2d}}\phi(q,p)\rho(dqdp)+\int_{\bR^{2d}}\phi'(q',p')\rho'(dq'dp')
\ea
$$
for all $\phi,\phi'\in C_b(\bR^{2d})$.
\end{Def}

\smallskip
Specifically, the proposition below explains how the pseudo-metric $E_{\hbar}$ compares to the Wasserstein distance $\MKd$.

\begin{Prop}\lb{P-EWass} Let $R\in\cD^2(\fH)$ and let $f$ be a probability distribution on $\bR^d\times\bR^d$ with finite second order moment \eqref{2ndMom}.

\noindent
(a) One has
$$
E_{\hbar}(f,R)^2\ge d\hbar\,.
$$
(b) One has
$$
E_{\hbar}(f,R)^2\ge\MKd(f,\tilde W_{\hbar}(R))^2-d\hbar
$$
(c) For each Borel probability measure $\mu$ on $\bR^d\times\bR^d$ with finite second order moment as in \eqref{2ndMom}, one has
$$
E_{\hbar}(f,\Op^T_{\hbar}((2\pi\hbar)^d\mu))^2\le\MKd(f,\mu)^2+d\hbar\,.
$$
\end{Prop}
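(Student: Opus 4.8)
I would prove the three bounds separately; all of them rest on two elementary facts about the coherent states $|q,p\ra$. The first is the operator identity, valid for every $(x,\xi)\in\bR^{2d}$,
$$
\tfrac1{(2\pi\hbar)^d}\int_{\bR^{2d}}\big(|x-q|^2+|\xi-p|^2\big)\,|q,p\ra\la q,p|\,\dd q\dd p=c(x,\xi,y,\hbar D_y)+d\hbar\, I_\fH\,,
$$
which follows by computing second moments of a Gaussian on the position side and, after a Fourier transform, on the momentum side; in particular $\la q,p|c(x,\xi,y,\hbar D_y)|q,p\ra=|x-q|^2+|\xi-p|^2+d\hbar$. The second is the identification of the diagonal of a density operator in the coherent-state basis with its Husimi function, $\tfrac1{(2\pi\hbar)^d}\la q,p|R|q,p\ra=\tilde W_\hbar(R)(q,p)$, which is the standard fact that $e^{\hbar\Dlt_{q,p}/4}$ acts by convolution with the Wigner function of the coherent state centered at the origin. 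Part (a) is then immediate: after the unitary change of variable $y\mapsto y-x$ followed by multiplication by $e^{i\xi\cdot y/\hbar}$, the operator $c(x,\xi,y,\hbar D_y)$ becomes $-\hbar^2\Dlt_y+|y|^2$, whose bottom eigenvalue is $d\hbar$, so $c(x,\xi,y,\hbar D_y)\ge d\hbar\, I_\fH$; hence $\Tr_\fH(Q(x,\xi)^{1/2}cQ(x,\xi)^{1/2})\ge d\hbar\,\Tr_\fH(Q(x,\xi))=d\hbar\,f(x,\xi)$ for a.e.\ $(x,\xi)$, and integrating over $(x,\xi)$ and taking the infimum over $Q\in\cC(f,R)$ gives $E_\hbar(f,R)^2\ge d\hbar$.

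For (b), given $Q\in\cC(f,R)$ I would introduce the nonnegative measure
$$
\pi(\dd x\dd\xi\dd q\dd p):=\tfrac1{(2\pi\hbar)^d}\la q,p|Q(x,\xi)|q,p\ra\,\dd x\dd\xi\dd q\dd p\,.
$$
Integrating out $(q,p)$ using $\Op^T_\hbar(1)=I$ shows its first marginal is $f$, and integrating out $(x,\xi)$ (exchanging the scalar integral with the operator integral producing $R$) together with the second coherent-state fact shows its second marginal is $\tilde W_\hbar(R)$; thus $\pi\in\Pi(f,\tilde W_\hbar(R))$. Its transportation cost is, by the first coherent-state identity and Lemma \ref{L-FiniteEnerg} (to move the square roots past the trace),
$$
\int_{\bR^{4d}}(|x-q|^2+|\xi-p|^2)\,\dd\pi=\iint_{\bR^{2d}}\Tr_\fH\big(Q(x,\xi)^{1/2}c(x,\xi,y,\hbar D_y)Q(x,\xi)^{1/2}\big)\,\dd x\dd\xi+d\hbar\,.
$$
Hence $\MKd(f,\tilde W_\hbar(R))^2$ is bounded above by the right-hand side for every $Q$, and minimizing over $Q\in\cC(f,R)$ yields (b). (Finiteness of the second moment of $\tilde W_\hbar(R)$, needed for $\MKd$ to be defined, follows from the same identity and \eqref{FiniteEnerg}.)

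For (c), conversely, I would start from an arbitrary $\pi\in\Pi(f,\mu)$, disintegrate it with respect to its first marginal, $\pi(\dd x\dd\xi\dd q\dd p)=f(x,\xi)\,\l_{x,\xi}(\dd q\dd p)\,\dd x\dd\xi$ with $\l_{x,\xi}$ a probability measure on $\bR^{2d}$ depending measurably on $(x,\xi)$, and set
$$
Q(x,\xi):=f(x,\xi)\int_{\bR^{2d}}|q,p\ra\la q,p|\,\l_{x,\xi}(\dd q\dd p)\,.
$$
Since $\||q,p\ra\|=1$, this is a nonnegative operator with $\Tr_\fH(Q(x,\xi))=f(x,\xi)$, and exchanging the $(x,\xi)$-integral with the $(q,p)$-integral and using that $\pi$ has second marginal $\mu$ gives $\iint Q(x,\xi)\,\dd x\dd\xi=\int|q,p\ra\la q,p|\,\mu(\dd q\dd p)=\Op^T_\hbar((2\pi\hbar)^d\mu)$; hence $Q\in\cC\big(f,\Op^T_\hbar((2\pi\hbar)^d\mu)\big)$. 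Using Lemma \ref{L-FiniteEnerg} and $\la q,p|c(x,\xi,y,\hbar D_y)|q,p\ra=|x-q|^2+|\xi-p|^2+d\hbar$, the cost of $Q$ equals $\int_{\bR^{4d}}(|x-q|^2+|\xi-p|^2)\,\dd\pi+d\hbar$, so $E_\hbar\big(f,\Op^T_\hbar((2\pi\hbar)^d\mu)\big)^2$ is at most this quantity, and minimizing over $\pi\in\Pi(f,\mu)$ proves (c).

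The bound in (a) is trivial once one has $c\ge d\hbar\, I_\fH$, and the whole content of (b) and (c) lies in the two coherent-state identities recalled at the start — in particular the ubiquitous $+d\hbar$ is nothing but the total Gaussian variance of a coherent state. So the real work is twofold: first, proving those identities carefully, where the momentum-variable computation requires the explicit Gaussian Fourier transform; second, the functional-analytic bookkeeping — measurability of $(x,\xi)\mapsto Q(x,\xi)$ and of the disintegration $\l_{x,\xi}$, legitimacy of the repeated interchanges of a scalar integral with an $\cL^1(\fH)$-valued operator integral, and the correct use of Lemma \ref{L-FiniteEnerg} with the unbounded positive operator $c(x,\xi,y,\hbar D_y)$. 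These are routine given the separability of $\cL^1(\fH)$ and the positivity of every integrand involved, but they are what a complete proof must spell out.
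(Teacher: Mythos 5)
Your argument is correct, and it is essentially the proof of this Proposition: the paper itself states it without proof, citing [FGTPaulARMA], where exactly these ingredients are used --- the T\"oplitz quantization identity $\Op^T_{\hbar}\big((2\pi\hbar)^d(|x-q|^2+|\xi-p|^2)\dd q\dd p\big)=c(x,\xi,y,\hbar D_y)+d\hbar I$ together with the harmonic-oscillator ground state for (a), the classical coupling $\pi$ built from the coherent-state diagonal of a quantum coupling for (b), and the quantum coupling built from a disintegrated classical coupling for (c). The technical caveats you flag (measurability, interchanges of trace and integral for the unbounded positive cost, use of Lemma \ref{L-FiniteEnerg}) are exactly the points handled in that reference, so nothing essential is missing.
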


\smallskip
The pseudo-metric $E_{\hbar}$ can be used to obtain a quantitative formulation of the classical limit of quantum mechanics, as explained in \cite{FGTPaulARMA}. Henceforth, we denote by $V$ a real-valued function satisfying
\be\lb{HypV}
V^-\in L^{d/2}(\bR^d)\,,\quad\text{ and }\quad V\in C^{1,1}(\bR^d)\,.
\ee
(Here, the notation $V^-$ designates the function $x\mapsto V^-(x):=\max(-V(x),0)$.)

Let $\l\ge 0$, and set
$$
H_\l(x,\xi):=\tfrac12\l|\xi|^2+V(x)\,.
$$
(From the physical point of view, the parameter $\l\ge 0$ which appears in the definition of the Hamiltonian $H_\l$ can be thought of as the reciprocal mass of the particle whose dynamics is defined in terms of the Hamiltonian flow associated
to $H_\l$, whose definition is recalled below. In the present paper, the parameter $\l$ is used only as a convenient notation for defining the various time-splitting algorithms considered.)

Since $V$ satisfies the second condition in \eqref{HypV}, we deduce from the Cauchy-Lipschitz theorem that the Hamiltonian $H_\l$ generates a globally defined flow denoted 
$$
(X(t;x,\xi),\Xi(t;x,\xi))
$$ 
on $\bR^d\times\bR^d$. In other words, $t\mapsto(X(t;x,\xi),\Xi(t;x,\xi))$ is the solution to the Cauchy problem
$$
\dot{X}=\l\Xi\,,\quad\dot{\Xi}=-\grad V(X)\,,\qquad (X(0;x,\xi),\Xi(0;x,\xi))=(x,\xi)\,.
$$
Equivalently, for each probability distribution $f^{in}$ on $\bR^d\times\bR^d$ satisfying \eqref{2ndMom}, the function $f^{in}\circ\Phi_{-t}$, where $\Phi_t$ is the map $(x,\xi)\mapsto\Phi_t(x,\xi):=(X(t;x,\xi),\Xi(t;x,\xi))$, is the solution to the Cauchy 
problem for the Liouville equation
\be\lb{CPLiouv}
\d_tf+\{H_\l,f\}=0\,,\qquad f\rstr_{t=0}=f^{in}\,.
\ee
Here, the notation $\{\cdot,\cdot\}$ designates the Poisson bracket defined on $\bR^d\times\bR^d$ by the relations
$$
\{x_j,x_k\}=\{\xi_j,\xi_k\}=0\,,\quad\{\xi_j,x_k\}=\de_{jk}\,.
$$

Likewise consider the quantum Hamiltonian 
$$
\cH_\l:=-\tfrac12\hbar^2\l\Dlt_y+V(y)\,.
$$
The parameter $\l$ that appears in the definition of the operator $\cH_\l$ has the same meaning, and is used similarly as in the classical setting.

The first condition in \eqref{HypV} implies that $\cH_\l$ has a self-adjoint extension (still denoted by $\cH_\l$) on $\fH$ (see Lemma 4.8b in chapter VI, \S 4 of \cite{Kato}). By the Stone theorem, $U(t):=e^{it\cH_\l/\hbar}$ is a unitary group 
on $\fH$, and, for each $R^{in}\in\cD(\fH)$, the density operator $R(t):=U^*(t)R^{in}U(t)$ is the generalized solution to the Cauchy problem for the von Neumann equation
\be\lb{CPSchro}
i\hbar\d_tR=[\cH_\l,R]\,,\qquad R\rstr_{t=0}=R^{in}\,.
\ee 

\begin{Thm}\lb{T-PropaEstim}
Let $R^{in}\in\cD^2(\fH)$ and let $f^{in}$ be a probability density on $\bR^d\times\bR^d$ satisfying \eqref{2ndMom}. Then
$$
E_{\hbar}(f^{in}\circ\Phi_{-t},U(t)^*R^{in}U(t))\le E_{\hbar}(f^{in},R^{in})\exp\left(\tfrac12t(\l+\max(1,\Lip(\grad V)^2))\right)\,.
$$
\end{Thm}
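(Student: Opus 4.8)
The plan is to transport an (almost) optimal coupling of the initial data simultaneously along the classical Hamiltonian flow and along the quantum unitary group, and to control the growth of the associated transportation cost by a Gr\"onwall inequality. Given $Q^{in}\in\cC(f^{in},R^{in})$, I would set
$$
Q(t,x,\xi):=U(t)^*\,Q^{in}\!\big(\Phi_{-t}(x,\xi)\big)\,U(t)\,,
$$
and check, using that $U(t)$ is unitary and that $\Phi_t$ preserves Lebesgue measure on $\bR^{2d}$ (its generating vector field $(\l\xi,-\grad V(x))$ is divergence free), that $Q(t,\cdot)\in\cC(f^{in}\circ\Phi_{-t},U(t)^*R^{in}U(t))$. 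Setting
$$
\cE(t):=\iint_{\bR^{2d}}\Tr_\fH\!\Big(Q(t,x,\xi)^{1/2}\,c(x,\xi,y,\hbar D_y)\,Q(t,x,\xi)^{1/2}\Big)\dd x\dd\xi\,,
$$
one then has $E_\hbar(f^{in}\circ\Phi_{-t},U(t)^*R^{in}U(t))^2\le\cE(t)$, so it suffices to prove $\cE(t)\le\cE(0)\,e^{(\l+M)t}$ with $M:=\max(1,\Lip(\grad V)^2)$ and then take the infimum over $Q^{in}$ and a square root.

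To differentiate $\cE$, the structural point is that $Q^{in}\circ\Phi_{-t}$ is constant along the characteristics of $H_\l$, hence solves the operator-valued Liouville equation $\d_tg+\{H_\l,g\}=0$; conjugation by $U(t)=e^{it\cH_\l/\hbar}$ converts $\d_t$ into $\d_t+\tfrac{i}{\hbar}[\cH_\l,\cdot\,]$, so $Q(t,\cdot)$ solves $\d_tQ+\{H_\l,Q\}+\tfrac{i}{\hbar}[\cH_\l,Q]=0$. Writing $\cE(t)=\iint_{\bR^{2d}}\Tr_\fH(c\,Q(t,x,\xi))\dd x\dd\xi$ (the two forms agree by cyclicity, cf. Lemma \ref{L-FiniteEnerg}), inserting the evolution equation, integrating by parts in $(x,\xi)$ in the Poisson-bracket contribution (the boundary terms vanish; to be rigorous I would first take $Q^{in}$ compactly supported in $(x,\xi)$, so that $Q(t,\cdot)$ is too) and using cyclicity of the trace in the commutator contribution, I expect to reach
$$
\dot\cE(t)=\iint_{\bR^{2d}}\Tr_\fH\!\Big(Q(t,x,\xi)^{1/2}\,\big(\{H_\l,c\}+\tfrac{i}{\hbar}[\cH_\l,c]\big)\,Q(t,x,\xi)^{1/2}\Big)\dd x\dd\xi\,.
$$

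The decisive computation is that of the operator $\{H_\l,c\}+\tfrac{i}{\hbar}[\cH_\l,c]$. Since $c=|x-y|^2+|\xi-\hbar D_y|^2$ is quadratic, $\{H_\l,c\}=2\l\,\xi\cdot(x-y)-2\grad V(x)\cdot(\xi-\hbar D_y)$; expanding $[-\tfrac12\hbar^2\l\Dlt_y,|x-y|^2]$ and $[V(y),|\xi-\hbar D_y|^2]$ (here $V\in C^{1,1}$ is used, so that $\Dlt V\in L^\infty$) one finds that the a priori uncontrolled terms $\xi\cdot(x-y)$ and $\xi\cdot\grad V$, together with the $O(\hbar)$ quantum corrections, regroup into
$$
\{H_\l,c\}+\tfrac{i}{\hbar}[\cH_\l,c]=\l\,(A+A^*)+(B+B^*)\,,\quad A:=(x-y)\cdot(\xi-\hbar D_y)\,,\quad B:=\big(\grad V(y)-\grad V(x)\big)\cdot(\xi-\hbar D_y)\,,
$$
the zeroth-order imaginary pieces $-i\hbar\l d$ and $+i\hbar\Dlt V(y)$ being exactly what symmetrizes $A$ and $B$. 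This regrouping holds uniformly in $\hbar$, which is the source of the $\hbar$-independence of the final bound. To conclude, with $b_j:=x_j-y_j$, $e_j:=\xi_j-\hbar D_{y_j}$, $w_j:=\d_{y_j}V(y)-\d_{x_j}V(x)$ (all self-adjoint), the elementary inequalities $\pm(b_je_j+e_jb_j)\le b_j^2+e_j^2$ and $\pm(w_je_j+e_jw_j)\le w_j^2+e_j^2$ summed over $j$ give $-c\le A+A^*\le c$ and, since $|w|^2+|\xi-\hbar D_y|^2\le\Lip(\grad V)^2|x-y|^2+|\xi-\hbar D_y|^2\le Mc$, also $-Mc\le B+B^*\le Mc$. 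As $Q(t,x,\xi)\ge0$, this gives $\dot\cE(t)\le(\l+M)\cE(t)$, and Gr\"onwall finishes the proof.

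The algebra above is elementary; the real difficulty, and the main obstacle, is the functional-analytic bookkeeping. One must show that $\cE(t)$ is finite (which follows from $R^{in}\in\cD^2(\fH)$, the finite second moment of $f^{in}$, and Lemma \ref{L-FiniteEnerg}) and that it is (locally absolutely) continuous in $t$; that differentiation may be exchanged with the trace and the phase-space integral; and that every cyclicity and ``integration by parts under the trace'' identity involving the unbounded operators $c$ and $\cH_\l$ is licit. The hard part will be precisely these justifications: I would handle them by first establishing the identities for a dense class of smooth couplings $Q^{in}$, compactly supported in $(x,\xi)$ and with values in a suitable Schwartz/finite-rank class, relying on the Hilbert--Schmidt bounds coming from the finite-energy hypotheses, and then passing to the limit by an approximation argument---much as carried out in \cite{FGTPaulARMA}.
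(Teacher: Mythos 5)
Your proposal is correct and is essentially the argument the paper relies on: the paper does not reprove this statement but cites Theorem 2.7 of \cite{FGTPaulARMA}, whose proof is exactly your scheme of propagating a coupling by $Q(t,x,\xi)=U(t)^*Q^{in}(\Phi_{-t}(x,\xi))U(t)$, computing $\{H_\l,c\}+\tfrac{i}{\hbar}[\cH_\l,c]$, bounding it by $(\l+\max(1,\Lip(\grad V)^2))\,c$, and applying Gr\"onwall before optimizing over $Q^{in}$. Your operator identity and the symmetrization of the cross terms are the same computation carried out there (and sketched in the authors' working notes for the split dynamics), so nothing essential is missing.
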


This is a straightforward variant of Theorem 2.7 in \cite{FGTPaulARMA} with an external potential and without interaction potential (i.e. in the special case $N=n=1$). The parameter $\l\ge 0$ appearing in the statement above is the other 
(unessential) difference with the situation discussed in \cite{FGTPaulARMA}. 

%%%%%%%%%%%%%%%%%%%%%%%%%%%%%%%%%%%%%%%%%%%%%%%%%%%%%%%%%%%%%%%%%%%%%%%%%%%%%%%%%%%%%%%%%%%%%%%%%%%%%%%%%%%%%%%%%%%%%%%%%

\section{Main Result}

%%%%%%%%%%%%%%%%%%%%%%%%%%%%%%%%%%%%%%%%%%%%%%%%%%%%%%%%%%%%%%%%%%%%%%%%%%%%%%%%%%%%%%%%%%%%%%%%%%%%%%%%%%%%%%%%%%%%%%%%%

The simple time-splitting method for the von Neumann equation is
\be\lb{SSplitt}
\left\{
\ba
{}&R^0_\hb=R^{in}_\hb\,,
\\
&i\hbar\d_tA_\hb=[-\tfrac12\hb^2\Dlt_x,A_\hb]\,,&&\qquad A_\hb\rstr_{t=0}=R^n_\hb\,,\qquad n\in\bN\,,
\\
&i\hbar\d_tB_\hb=[V(x),B_\hb]\,,&&\qquad B_\hb\rstr_{t=0}=A_\hb(\Dlt t)\,,
\\
&R^{n+1}_\hb=B_\hb(\Dlt t)\,.
\ea
\right.
\ee

\begin{Thm}\lb{T-SSplit}
Let $V$ satisfy \eqref{HypV}, and assume that $R^{in}\in\cD^2(\fH)$ is a T\"oplitz operator on $\fH$. Let $t\mapsto R_{\hbar}(t)$ be the solution of the Cauchy problem \eqref{CPSchro}, and let $R_{\hbar}^n$ be the sequence of density operators
constructed by the simple splitting method \eqref{SSplitt}. Let $T>0$, and pick a time step $\Dlt t\in(0,\tfrac12)$. Then, for each $n=0,\ldots,[T/\Dlt t]$, the simple splitting method satisfies the following error estimate, stated in terms of the quadratic 
Monge-Kantorovich or Wasserstein distance between the Husimi functions of the approximate and the exact quantum density operators:
$$
\ba
\MKd(\tilde W_\hb(R^n),\tilde W_\hb(R(n\Dlt t)))
\\
\le C_T\Dlt t+2\sqrt{d\hb}\left(1+\exp\left(\tfrac12T(1+\max(1,\Lip(\grad V)^2))\right)\right)&\,,
\ea
$$
where the constant $C_T$ depends only on $T$, $\grad V(0)$ and $\Lip(\grad V)$, and is defined in formula \eqref{DefCT} below.
\end{Thm}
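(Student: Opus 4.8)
\smallskip\noindent\emph{Sketch of a proof.} The plan is to bridge the quantum splitting iterate $R^n$ and the exact quantum solution $R(n\Dlt t)$ through their classical shadows, so that the comparison breaks into two ``quantum versus classical'' discrepancies controlled by the pseudometric $E_\hbar$ together with one purely classical splitting error. Since $R^{in}$ is a T\"oplitz operator in $\cD^2(\fH)$, write $R^{in}=\Op^T_\hbar((2\pi\hbar)^d\mu^{in})$ for a Borel probability measure $\mu^{in}$ on $\bR^{2d}$; membership in $\cD^2(\fH)$ is equivalent to $M_2:=\iint_{\bR^{2d}}(|x|^2+|\xi|^2)\,d\mu^{in}<\infty$ (indeed $\Tr_\fH((R^{in})^{1/2}(|y|^2-\hbar^2\Dlt_y)(R^{in})^{1/2})=M_2+d\hbar$ for a T\"oplitz operator). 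Let $\Phi_t$ be the Hamiltonian flow of $H_1(x,\xi)=\tfrac12|\xi|^2+V(x)$, the classical counterpart of the operator $\cH_1=-\tfrac12\hbar^2\Dlt_y+V$ governing \eqref{CPSchro}--\eqref{SSplitt}, and let $\Phi^{kin}_t$ and $\Phi^{pot}_t$ be the globally defined flows of $\tfrac12|\xi|^2$ and of $V(x)$, so that $\Phi^{kin}_t(x,\xi)=(x+t\xi,\xi)$ and $\Phi^{pot}_t(x,\xi)=(x,\xi-t\grad V(x))$. Set $\Psi_{\Dlt t}:=\Phi^{pot}_{\Dlt t}\circ\Phi^{kin}_{\Dlt t}$: one application of the classical map $\Psi_{\Dlt t}$ is the exact classical shadow of one step of \eqref{SSplitt}. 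Define $\mu(t):=\mu^{in}\circ\Phi_{-t}$, the solution of \eqref{CPLiouv} with $\l=1$, and $\mu^n:=(\Psi_{\Dlt t}^n)_\sharp\mu^{in}$, the classical Lie--Trotter iterates. A routine estimate shows that $R^n,R(t)\in\cD^2(\fH)$ and that $\mu^n,\mu(t)$ have finite second moment for every $n$ and $t$, so that $E_\hbar$ and $\MKd$ are meaningful throughout.

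By the triangle inequality for $\MKd$, the left-hand side of the asserted estimate is at most $(\mathrm I)+(\mathrm{II})+(\mathrm{III})$, where $(\mathrm I):=\MKd(\tilde W_\hbar(R^n),\mu^n)$, $(\mathrm{II}):=\MKd(\mu^n,\mu(n\Dlt t))$ and $(\mathrm{III}):=\MKd(\mu(n\Dlt t),\tilde W_\hbar(R(n\Dlt t)))$. For $(\mathrm{III})$: Proposition~\ref{P-EWass}(b) gives $(\mathrm{III})^2\le E_\hbar(\mu(n\Dlt t),R(n\Dlt t))^2+d\hbar$; Theorem~\ref{T-PropaEstim} with $\l=1$ gives $E_\hbar(\mu(n\Dlt t),R(n\Dlt t))\le E_\hbar(\mu^{in},R^{in})\exp(\tfrac12 n\Dlt t(1+\max(1,\Lip(\grad V)^2)))$; and Proposition~\ref{P-EWass}(c) with $f=\mu=\mu^{in}$ gives $E_\hbar(\mu^{in},R^{in})^2\le\MKd(\mu^{in},\mu^{in})^2+d\hbar=d\hbar$. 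Using $n\Dlt t\le T$ and $\sqrt{a+b}\le\sqrt a+\sqrt b$ yields $(\mathrm{III})\le\sqrt{d\hbar}\,(1+\exp(\tfrac12 T(1+\max(1,\Lip(\grad V)^2))))$. For $(\mathrm I)$: Proposition~\ref{P-EWass}(b) gives $(\mathrm I)^2\le E_\hbar(\mu^n,R^n)^2+d\hbar$, and one bounds $E_\hbar(\mu^n,R^n)$ by applying the propagation estimate to each half-step of \eqref{SSplitt} paired with the matching classical half-step. For the ``$V$'' half-step, Theorem~\ref{T-PropaEstim} with $\l=0$ produces the factor $\exp(\tfrac12\Dlt t\max(1,\Lip(\grad V)^2))$; for the free half-step, since the potential is absent, the quantity $\max(1,\Lip(\grad V)^2)$ — which enters the proof of Theorem~\ref{T-PropaEstim} only through the potential's contribution to the underlying Gr\"onwall inequality — drops out, leaving the sharp factor $\exp(\tfrac12\l\Dlt t)=\exp(\tfrac12\Dlt t)$. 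Composing these two factors and iterating from $E_\hbar(\mu^0,R^0)=E_\hbar(\mu^{in},R^{in})\le\sqrt{d\hbar}$ over $n\le[T/\Dlt t]$ steps gives exactly the same bound on $(\mathrm I)$ as was found for $(\mathrm{III})$. Hence $(\mathrm I)+(\mathrm{III})\le 2\sqrt{d\hbar}\,(1+\exp(\tfrac12 T(1+\max(1,\Lip(\grad V)^2))))$, which is the $\hbar$-dependent term in the statement.

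There remains the classical splitting error $(\mathrm{II})=\MKd\bigl((\Psi_{\Dlt t}^n)_\sharp\mu^{in},(\Phi_{n\Dlt t})_\sharp\mu^{in}\bigr)$. By the elementary bound $\MKd(S_\sharp\nu,T_\sharp\nu)\le\|S-T\|_{L^2(\nu)}$ (the map $z\mapsto(S(z),T(z))$ pushes $\nu$ forward to an admissible coupling), it suffices to estimate $\|\Psi_{\Dlt t}^n-\Phi_{n\Dlt t}\|_{L^2(\mu^{in})}$. The local one-step error obeys, for $\Dlt t\in(0,\tfrac12)$,
$$
|\Psi_{\Dlt t}(z)-\Phi_{\Dlt t}(z)|\le C_1\,(\Dlt t)^2\,(1+|z|)\,,\qquad z\in\bR^{2d}\,,
$$
with $C_1$ depending only on $\grad V(0)$ and $\Lip(\grad V)$: writing $\Phi_t(x,\xi)=(X(t),\Xi(t))$, the position component of the error equals $\int_0^{\Dlt t}(\xi-\Xi(s))\,ds$ with $|\xi-\Xi(s)|\le s\sup_{r\le s}|\grad V(X(r))|$, whereas the momentum component equals $\int_0^{\Dlt t}\bigl(\grad V(X(s))-\grad V(X(\Dlt t))\bigr)\,ds$, bounded by $\Lip(\grad V)\int_0^{\Dlt t}|X(s)-X(\Dlt t)|\,ds$ with $|X(s)-X(\Dlt t)|\le\int_s^{\Dlt t}|\Xi(r)|\,dr$, all terms being $O(\Dlt t)$ on $[0,\Dlt t]$ where $|X(r)|+|\Xi(r)|\le C(1+|z|)$ by Gr\"onwall; crucially only the Lipschitz bound on $\grad V$ is used, so the $C^{1,1}$ hypothesis \eqref{HypV} suffices and no second derivatives of $V$ occur. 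The standard ``Lady Windermere'' telescoping
$$
\Psi_{\Dlt t}^n-\Phi_{n\Dlt t}=\sum_{k=0}^{n-1}\Bigl(\Phi_{(n-1-k)\Dlt t}\circ\Psi_{\Dlt t}^{k+1}-\Phi_{(n-1-k)\Dlt t}\circ\Phi_{\Dlt t}\circ\Psi_{\Dlt t}^{k}\Bigr)\,,
$$
together with $\Lip(\Phi_s)\le e^{C_2 T}$ (Gr\"onwall on the variational equation of $H_1$) and the at-most-linear growth $|\Psi_{\Dlt t}^k(z)|^2+1\le e^{C_3 T}(|z|^2+1)$ (so that $\int(1+|\Psi_{\Dlt t}^k(z)|)^2\,d\mu^{in}\le 2e^{C_3 T}(1+M_2)$), then yields $\|\Psi_{\Dlt t}^n-\Phi_{n\Dlt t}\|_{L^2(\mu^{in})}\le n\,C_1\,e^{(C_2+C_3)T}\sqrt{2(1+M_2)}\,(\Dlt t)^2\le C_T\Dlt t$ for $n\le[T/\Dlt t]$, where $C_T$ — the constant displayed in \eqref{DefCT} — depends only on $T$, $\grad V(0)$, $\Lip(\grad V)$ and $M_2$ (hence on $R^{in}$ through $M_2$), but not on $\hbar$. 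Adding $(\mathrm I)$, $(\mathrm{II})$, $(\mathrm{III})$ gives the asserted estimate.

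I expect term $(\mathrm{II})$ to be the main obstacle. Although it is $\hbar$-independent, it requires (i) establishing the $O((\Dlt t)^2)$ local error using only the $C^{1,1}$ regularity of $V$ assumed in \eqref{HypV} — which rules out the usual Taylor expansion involving $D^2 V$ and forces the integral-form argument above — and (ii) carrying the linear growth of the Hamiltonian vector fields through the telescoping, so that the final constant depends on the initial state only via the second moment $M_2$. In contrast, $(\mathrm I)$ and $(\mathrm{III})$ are essentially bookkeeping on top of the already-available Proposition~\ref{P-EWass} and Theorem~\ref{T-PropaEstim}, the only delicate point being the use of the sharp propagation rate $\tfrac12\l$ (rather than $\tfrac12(\l+1)$) for the free half-step.
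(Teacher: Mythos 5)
Your proposal is correct, and its skeleton is exactly the paper's: a triangle inequality through the classical dynamics, the propagation estimate of Theorem~\ref{T-PropaEstim} applied once to the exact flow and once per sub-step of \eqref{SSplitt} (with the same sharp factor $\exp(\tfrac12\Dlt t)$ for the free step, which the paper also uses and which, as you note, comes from the structure of the Gr\"onwall argument rather than the literal statement with $V\equiv0$, $\l=1$), Proposition~\ref{P-EWass}~(b) to convert $E_\hbar$ into $\MKd$ at the price of $\sqrt{d\hbar}$, and Proposition~\ref{P-EWass}~(c) with the T\"oplitz initial datum to get $E_\hbar(\mu^{in},R^{in})\le\sqrt{d\hbar}$; your drift-then-kick map $\Psi_{\Dlt t}$ is the correct classical shadow of \eqref{SSplitt} and matches the paper's split Liouville scheme. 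Where you genuinely depart from the paper is the classical splitting error (II): the paper compares the exact characteristics with the ``numerical bi-characteristics'' by deriving an ODE for the split trajectory, bounding the perturbations $z(t),\zeta(t)$, and iterating a recursive inequality for $\MKd(f(n\Dlt t),f^n)^2$ together with an explicit propagation of the second moments $\mu_n$ of the iterates, which is what produces the explicit constant \eqref{DefCT}; you instead use the elementary bound $\MKd(S_\sharp\nu,T_\sharp\nu)\le\|S-T\|_{L^2(\nu)}$, a pointwise local error estimate $|\Psi_{\Dlt t}(z)-\Phi_{\Dlt t}(z)|\le C_1\Dlt t^2(1+|z|)$ (using only $\Lip(\grad V)$, which is equivalent to the paper's use of $\|\grad^2V\|_{L^\infty}$ under \eqref{HypV}), and a Lady Windermere telescoping with Lipschitz stability of $\Phi_t$ and moment growth of the iterates. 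Your route is more modular and standard for splitting schemes; the paper's route yields the particular closed-form constant \eqref{DefCT}. Note that your $C_T$, like the paper's, depends on the initial second moment as well as on $T$, $\grad V(0)$, $\Lip(\grad V)$ (the theorem's wording omits this dependence, but formula \eqref{DefCT} contains $\mu_0$), so this is not a discrepancy; the only small imprecision is writing the one-step momentum error with $\grad V(X(\Dlt t))$ in place of $\grad V(x+\Dlt t\xi)$, a replacement that costs only an $O(\Dlt t^3(1+|z|))$ term and does not affect the conclusion.
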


\smallskip
Instead of the simple splitting method, one can instead consider the Strang splitting method
\be\lb{StrSplitt}
\left\{
\ba
{}&R^0_\hb=R^{in}_\hb\,,
\\
&i\hbar\d_tA_\hb=[-\tfrac12\hb^2\Dlt_x,A_\hb]\,,&&\qquad A_\hb\rstr_{t=0}=R^n_\hb\,,
\\
&i\hbar\d_tB_\hb=[V(x),B_\hb]\,,&&\qquad B_\hb\rstr_{t=0}=A_\hb(\tfrac12\Dlt t)\,,\qquad n\in\bN\,,
\\
&i\hbar\d_tG_\hb=[-\tfrac12\hb^2\Dlt_x,G_\hb]\,,&&\qquad G_\hb\rstr_{t=0}=B_\hb(\Dlt t)\,,
\\
&R^{n+1}_\hb=G_\hb(\tfrac12\Dlt t)\,.
\ea
\right.
\ee

Strang splitting is a second order (in $\Dlt t$) method, so that the convergence rate obtained in the previous theorem can be improved as indicated below.

\begin{Thm}\lb{T-StrangSplit}
Let $V$ satisfy \eqref{HypV} and
$$
\grad^mV\in L^\infty(\bR^d)\,,\quad m=1,2,3\,.
$$
Let $R^{in}\in\cD^2(\fH)$ be a T\"oplitz operator on $\fH$, and let $t\mapsto R_{\hbar}(t)$ be the solution of the Cauchy problem \eqref{CPSchro}. On the other hand, let $R_{\hbar}^n$ be the sequence of density operators constructed by 
the Strang splitting method \eqref{StrSplitt}. Let $T>0$, and pick a time step $\Dlt t\in(0,\tfrac12)$. Then, for each $n=0,\ldots,[T/\Dlt t]$, the Strang splitting method satisfies the following error estimate, stated in terms of the quadratic 
Monge-Kantorovich or Wasserstein distance between the Husimi functions of the approximate and the exact quantum density operators:
$$
\ba
\MKd(\tilde W_\hb(R^n),\tilde W_\hb(R(n\Dlt t)))
\\
\le D_T\Dlt t^2+2\sqrt{d\hb}\left(1+\exp\left(\tfrac12T(1+\max(1,\Lip(\grad V)^2))\right)\right)&\,,
\ea
$$
where the constant $D_T$ depends only on $T$ and $\|\grad^mV\|_{L^\infty}$ for $m=1,2,3$, and is defined in formula \eqref{DefDT} below.
\end{Thm}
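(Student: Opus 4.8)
The plan is to insert, between the numerical density operator $R^n$ and the exact one $R(n\Dlt t)$, the corresponding \emph{classical} phase-space evolutions, and to control every quantum-classical discrepancy by means of the pseudo-metric $E_\hb$. Since $R^{in}\in\cD^2(\fH)$ is T\"oplitz, write $R^{in}=\Op^T_{\hb}((2\pi\hb)^d f^{in})$ with $f^{in}$ a probability density on $\bR^{2d}$ satisfying \eqref{2ndMom}; Proposition \ref{P-EWass}(c), applied with $f=\mu=f^{in}$, then gives $E_\hb(f^{in},R^{in})^2\le d\hb$. Let $t\mapsto f(t):=f^{in}\circ\Phi_{-t}$ solve the Liouville equation \eqref{CPLiouv} with $\l=1$, and let $(f^n)_{n\ge0}$ be the classical Strang iterates: $f^0=f^{in}$, $f^{n+1}:=f^n\circ\Psi_{\Dlt t}^{-1}$, where $\Psi_{\Dlt t}:=\Phi^{\mathrm{kin}}_{\Dlt t/2}\circ\Phi^{\mathrm{pot}}_{\Dlt t}\circ\Phi^{\mathrm{kin}}_{\Dlt t/2}$ with $\Phi^{\mathrm{kin}}_s(x,\xi)=(x+s\xi,\xi)$ and $\Phi^{\mathrm{pot}}_s(x,\xi)=(x,\xi-s\grad V(x))$ the Hamiltonian flows of $\tfrac12|\xi|^2$ and of $V$; these are the exact classical shadows of the two quantum half-steps in \eqref{StrSplitt}. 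Since $\MKd$ is a genuine distance on the probability measures with finite second moment, the triangle inequality gives
$$
\ba
\MKd(\tilde W_\hb(R^n),\tilde W_\hb(R(n\Dlt t)))
&\le\MKd(\tilde W_\hb(R^n),f^n)+\MKd(f^n,f(n\Dlt t))
\\
&\quad+\MKd(f(n\Dlt t),\tilde W_\hb(R(n\Dlt t)))\,,
\ea
$$
and I would bound the three terms separately.

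For the two outer (quantum-classical) terms I would pass from $\MKd$ to $E_\hb$ via Proposition \ref{P-EWass}(b), i.e. $\MKd(g,\tilde W_\hb(R))\le\sqrt{E_\hb(g,R)^2+d\hb}\le E_\hb(g,R)+\sqrt{d\hb}$. The third term is then immediate from the propagation estimate of Theorem \ref{T-PropaEstim} (applied with $\l=1$) together with the T\"oplitz bound above, giving $\MKd(f(n\Dlt t),\tilde W_\hb(R(n\Dlt t)))\le\sqrt{d\hb}\,(1+\exp(\tfrac12 T(1+\max(1,\Lip(\grad V)^2))))$. For the first term I would apply Theorem \ref{T-PropaEstim} substep by substep along \eqref{StrSplitt}: each kinetic half-step advances $R^n$ by the free Schr\"odinger evolution and $f^n$ by $\Phi^{\mathrm{kin}}$, which is Theorem \ref{T-PropaEstim} with $\l=1$, $V\equiv0$ --- and, inspecting its proof, for $V\equiv0$ the transportation cost satisfies $\tfrac{d}{dt}(\mathrm{cost})\le\l\,(\mathrm{cost})$, so the contraction rate may be taken to be $\tfrac12\l$; each potential step advances $R^n$ by the quantum evolution with potential $V$ and $f^n$ by $\Phi^{\mathrm{pot}}$, which is Theorem \ref{T-PropaEstim} with $\l=0$. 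Composing over the $n\le[T/\Dlt t]$ macro-steps --- whose kinetic substeps have total length $n\Dlt t\le T$, and which contain $n$ potential substeps of length $\Dlt t$ --- and starting from $E_\hb(f^0,R^0)^2\le d\hb$, one gets $E_\hb(f^n,R^n)\le\sqrt{d\hb}\,\exp(\tfrac12 T(1+\max(1,\Lip(\grad V)^2)))$, hence the same bound for $\MKd(\tilde W_\hb(R^n),f^n)$ as for the third term; adding the two produces the $2\sqrt{d\hb}\,(1+\exp(\cdots))$ in the statement.

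The middle term $\MKd(f^n,f(n\Dlt t))$ is purely classical, and this is where the hypotheses $\grad^mV\in L^\infty$, $m=1,2,3$, and the order-two rate enter. I would estimate it by the standard consistency-plus-stability (``Lady Windermere's fan'') argument. Using $\MKd(\sigma_\#\nu,\tau_\#\nu)\le\|\sigma-\tau\|_{L^2(\nu)}$, one first proves the one-step consistency bound $\|\Psi_{\Dlt t}-\Phi_{\Dlt t}\|=O(\Dlt t^3)$ by Taylor-expanding the threefold composition $\Psi_{\Dlt t}$ and the exact flow $\Phi_{\Dlt t}$ of $H_1$ to third order --- the leading Strang error being governed by the iterated Poisson brackets $\{\tfrac12|\xi|^2,\{\tfrac12|\xi|^2,V\}\}=\grad^2V(x)[\xi,\xi]$ and $\{V,\{V,\tfrac12|\xi|^2\}\}=|\grad V(x)|^2$, which is precisely why $\grad^2V$, and (after one more differentiation of the remainder) $\grad^3V$, come in --- and then propagates these one-step errors over $n\le T/\Dlt t$ steps using the bi-Lipschitz regularity of $\Phi_{\Dlt t}$ (Lipschitz constant $\exp(O(\|\grad^2V\|_\infty\Dlt t))$). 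This yields $\MKd(f^n,f(n\Dlt t))\le D_T\Dlt t^2$ with $D_T$ depending only on $T$ and $\|\grad^mV\|_{L^\infty}$, $m=1,2,3$. Summing the three bounds gives the theorem.

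The hard part will be this classical middle estimate: unlike the Lie case of Theorem \ref{T-SSplit}, the relevant local truncation error is of order $\Dlt t^3$ and its leading coefficient involves the \emph{double} Poisson brackets above, so a rigorous derivation requires a careful third-order expansion of the composed flow against the true flow, plus genuine use of the boundedness of $\grad V$, $\grad^2V$, $\grad^3V$ --- both to make the one-step error uniform and to control the Gronwall constant in the accumulation of errors. Once that is secured, the two quantum-classical terms are handled essentially mechanically by Proposition \ref{P-EWass}(b),(c) and Theorem \ref{T-PropaEstim}, along the lines of the proof of Theorem \ref{T-SSplit}; the one subtlety there is to use the sharpened propagation rate for the free half-steps ($V\equiv0$), so that the exponent in the $\sqrt{d\hb}$-term comes out as $\tfrac12 T(1+\max(1,\Lip(\grad V)^2))$ and not larger.
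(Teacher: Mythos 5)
Your overall architecture is the same as the paper's: insert the classical Liouville solution and its classical Strang iterates between $R(n\Dlt t)$ and $R^n$, control the two quantum--classical legs by applying Theorem \ref{T-PropaEstim} substep by substep (with the sharpened rate $e^{t/2}$ for the free half-steps, which you correctly flag as necessary to land on the exponent $\tfrac12T(1+\max(1,\Lip(\grad V)^2))$), convert $E_\hb$ into $\MKd$ of Husimi transforms via Proposition \ref{P-EWass}(b), and use Proposition \ref{P-EWass}(c) with $\mu=f^{in}$ to start from $E_\hb(f^{in},R^{in})^2\le d\hb$. The only genuinely different ingredient is the classical middle term: you propose a discrete consistency-plus-stability (Lady Windermere) argument comparing the maps $\Psi_{\Dlt t}^n$ and $\Phi_{n\Dlt t}$ in $L^2$ of the transported measure, whereas the paper derives the ODE satisfied by the numerical bi-characteristics $(Z_t,\Om_t)$ of $K_{\Dlt t/2}\circ P_{\Dlt t}\circ K_{\Dlt t/2}$, writes the defect terms $s(t),\si(t)=O(t^2)$, and runs a continuous Gronwall comparison with the exact Newton flow, coupled with an optimal coupling at each step. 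Both routes are legitimate and give the same $O(\Dlt t^2)$ global classical error.

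There is, however, one concrete gap in your classical estimate: the one-step error of the Strang map is \emph{not} uniform over phase space under the stated hypotheses. As your own expansion shows, the leading defect contains $\{\tfrac12|\xi|^2,\{\tfrac12|\xi|^2,V\}\}=\grad^2V(x)[\xi,\xi]$ (and, in the paper's formulation, terms like $t^2\grad^3V:\Om_t^{\otimes2}$), which grow quadratically in the momentum variable; boundedness of $\grad V,\grad^2V,\grad^3V$ only yields a local error $O(\Dlt t^3(1+|\xi|^2))$. Consequently the $L^2(\nu)$ bound you need for $\MKd(\sigma_\#\nu,\tau_\#\nu)$ requires uniform-in-$n$ control of the second and \emph{fourth} momentum moments of the split iterates $f^n$ over $n\le[T/\Dlt t]$. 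This is exactly the missing lemma: one must propagate $\nu_n:=\int(|\om|^2+|\om|^4)f^n$ (easy here, since $\grad V\in L^\infty$ shifts momenta by at most $\|\grad V\|_{L^\infty}\Dlt t$ per step, giving $\nu_n\le e^{3n\Dlt t}\nu_0+O(1)$ as in the paper), and the resulting constant $D_T$ then also involves these initial moments $\nu_0$ (as it does in \eqref{DefDT}), not only $T$ and $\|\grad^mV\|_{L^\infty}$. With that moment-propagation step added, your fan argument closes and reproduces the theorem; without it, the claim that the one-step error is ``uniform'' is false and the accumulation estimate does not go through as written.
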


\smallskip
Our strategy is the following. First, Theorem \ref{T-PropaEstim} gives the error between the solution of the von Neumann solution \eqref{CPSchro} and that of the classical Liouville equation \eqref{CPLiouv}. Then we obtain an analogous 
error between the time split von Neumann and the time split Liouville. Finally we estimate the time splitting error of the classical Liouville equation, measured in distance $\text{dist}_{MK2}$.  Then a triangle type inequality leads to the 
results in Theorem \ref{T-SSplit} and \ref{T-StrangSplit}. This strategy is best illustrated by Figure 1.

\begin{figure}

\includegraphics[width=10cm]{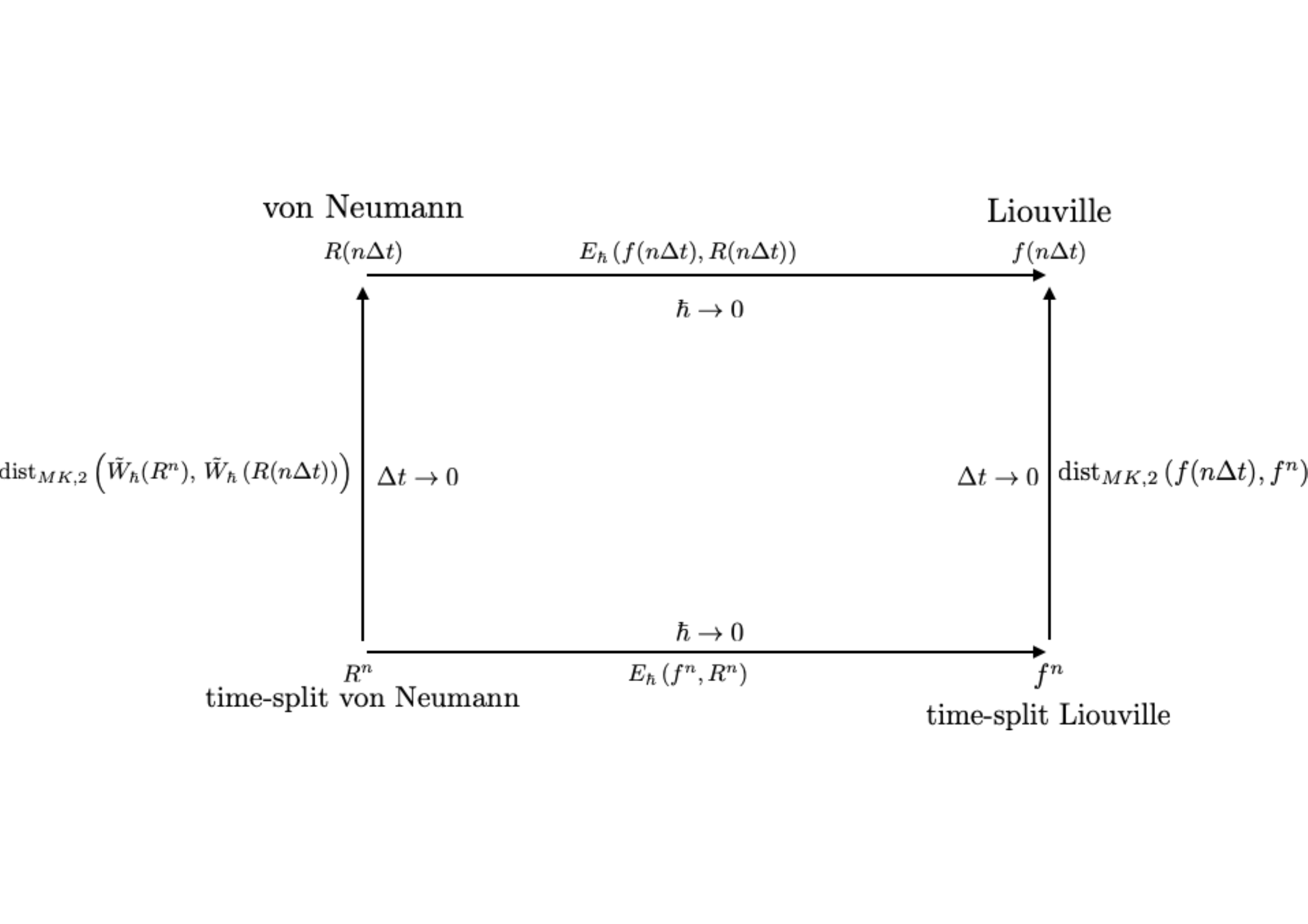}

\caption{Figure 1: the limits $\Delta t\to 0$ and $\hbar\to 0$.}

\end{figure}

\smallskip
The error estimates Theorems \ref{T-SSplit} and \ref{T-StrangSplit} do not provide a uniform in $\hbar$ error estimate, since they contain an $O(\hbar^{1/2})$ term in their right hand side. In particular, these error estimates are useful only 
in the vanishing $\hbar$ limit. Yet these two theorems contain all the new information on the time splitting methods for quantum dynamics in the semiclassical regime that can be obtained with our approach. Besides, these two theorems 
are of independent interest, and lead to better convergence rates that the uniform error estimate given below in the vanishing $\hbar$ regime. In contrast, a classical $L^2$ norm estimate gives an error of order $O(\Delta t/\hbar)^m$ 
\cite{Baderetal} (for some positive integer $m$ that depends on the order of the splitting), which blows up as $\hbar\to 0$.

\smallskip
In order to obtain uniform in $\hbar$ error estimates for the simple and the Strang splitting methods, we need to optimize these estimates with the error estimates for the time splitting method in the case of the Schr\"odinger equation 
with fixed $\hbar$ (or equivalently for $\hbar=1$). Such error estimates have been studied in detail and can be found for instance in \cite{DescombesThalham}. The idea of combining and optimizing the error estimates  in the 
asymptotic (macroscopic) regime and the microscopic regime is often used in numerical methods for kinetic and hyperbolic equations involving multiple scales, a computational methodology known as Asymptotic-Preserving Schemes
 \cite{GJL,Jin-AP}.

\smallskip
Our final uniform error estimate will be formulated in terms of an optimal  transport distance denoted $\Dist_1$, already used in \cite{FGPaulPulvi} (see formula (13) in \cite{FGPaulPulvi}). All the convergence statements in Theorems 
\ref{T-SSplit} and \ref{T-StrangSplit} are ultimately formulated in terms of the Monge-Kantorovich-Wasserstein distance $\MKd$, to which the ``pseudo-metric'' $E_{\hbar}$ can be conveniently compared (see Proposition \ref{P-EWass}), 
the uniform in $\hbar$ error estimates stated below as Corollaries \ref{C-SSplitU} and \ref{C-StrangSplitU} are all based on some optimization procedure comparing the $L^1$ and the $\MKd$ distances between the Husimi transforms 
of the exact and of the approximate solutions of the quantum dynamical problem. This optimization procedure is precisely the reason for using the distance $\Dist_1$, a weaker variant of the Monge-Kantorovich-Wasserstein distance 
of exponent $1$, with a transportation cost that is truncated at infinity. The definition of $\Dist_1$ is recalled below for the reader's convenience.

\begin{Def}
For all $\rho$ and $\rho'$, Borel probability measures on $\bR^{2d}$, we set
$$
\Dist_1(\rho,\rho'):=\inf_{\pi\in\Pi(\rho,\rho')}\int_{\bR^{2d}}\min(1,\sqrt{|q-q'|^2+|p-p'|^2})\pi(dqdpdq'dp')\,.
$$
Here, the notation $\Pi(\rho,\rho')$ designates the set of couplings of $\rho,\rho'$ already used to define the Monge-Kantorovich-Wasserstein distance $\MKd$ (Definition \ref{D-MKd}).
\end{Def}

\smallskip
Our uniform estimates for the simple splitting method is given in the following statement, which is a consequence of Theorem \ref{T-SSplit} and of the error estimate in Theorem 2 of \cite{DescombesThalham}.

\begin{Cor}\lb{C-SSplitU}
Let $V\in C^2(\bR^d)$ satisfy \eqref{HypV}, and let $R^{in}=\Op^T_{\hbar}((2\pi\hbar)^d\mu^{in})$, where $\mu^{in}$ is a Borel probability measure on $\bR^{2d}$ such that
$$
\int_{\bR^{2d}}(|q|^2+|p|^2)\mu^{in}(dqdp)<\infty\,.
$$
Let $t\mapsto R_{\hbar}(t)$ be the solution of the Cauchy problem \eqref{CPSchro}, and let $R_{\hbar}^n$ be the sequence of density operators constructed by the simple splitting method \eqref{StrSplitt}. Let $T>0$, and pick a time step 
$\Dlt t\in(0,\tfrac12)$. Then, for each $n=0,\ldots,[T/\Dlt t]$, the simple splitting method satisfies the following uniform in $\hbar$ error estimate:
$$
\Dist_1(\tilde W_{\hbar}(R_{\hbar}^n),\tilde W_{\hbar}(R_{\hbar}(n\Dlt t)))\le 2C[T,V,\mu^{in}]\Dlt t^{1/3}\,,
$$
where $C[T,V,\mu^{in}]$ is defined in \eqref{DefC[T,V,mu]}. In particular, the constant $C[T,V,\mu^{in}]$ is independent of $\hbar$.
\end{Cor}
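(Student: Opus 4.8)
The plan is to interpolate between two error estimates by means of the distance $\Dist_1$, which absorbs both a ``macroscopic'' (small $\Dlt t$, arbitrary $\hbar$) and a ``microscopic'' ($\hbar$ fixed, classical PDE analysis) contribution. The key observation is the following elementary comparison: for any two Borel probability measures $\rho,\rho'$ on $\bR^{2d}$ and any $\eta>0$, one has
$$
\Dist_1(\rho,\rho')\le\min\left(\tfrac1\eta\MKd(\rho,\rho')^2+\eta,\ W_1(\rho,\rho')\right),
$$
where $W_1$ is the (ordinary) Wasserstein distance of exponent $1$; the first bound comes from $\min(1,\sqrt s)\le\min(1,s/\eta+\eta/2+\ldots)$ applied under the infimizing coupling of $\MKd$ after splitting the integration domain at $|q-q'|^2+|p-p'|^2=\eta^2$, and the second is immediate since $\min(1,r)\le r$. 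Thus I would bound $\Dist_1(\tilde W_\hb(R^n_\hb),\tilde W_\hb(R_\hb(n\Dlt t)))$ by the $\MKd$-estimate of Theorem \ref{T-SSplit} (which is $O(\Dlt t+\sqrt{\hb})$) on the regime $\hb$ small, and by the classical (fixed-$\hbar$, equivalently $\hbar=1$ by rescaling) error estimate of Theorem 2 in \cite{DescombesThalham} (which gives an $L^2$-type bound of order $O(\Dlt t/\hb)$, hence after passing to the Husimi transforms an $\Dist_1$- or $W_1$-type bound of the same order) on the regime $\hbar$ bounded below.

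More precisely: first, from Theorem \ref{T-SSplit} and part (b) of Proposition \ref{P-EWass} — actually directly from the statement of Theorem \ref{T-SSplit}, which already involves $\MKd$ between Husimi functions — I get
$$
\MKd(\tilde W_\hb(R^n_\hb),\tilde W_\hb(R_\hb(n\Dlt t)))\le C_T\Dlt t+2\sqrt{d\hb}\big(1+e^{T(1+\max(1,\Lip(\grad V)^2))/2}\big)=:\Phi(\Dlt t,\hb),
$$
so that $\Dist_1\le\Phi^2/\eta+\eta$ for every $\eta>0$. Second, the Schrödinger-equation splitting estimate of \cite{DescombesThalham} gives, for the wave functions (in the pure-state case, and by a standard convexity/lifting argument for Töplitz mixed states), an $L^2$-error $\le K(T,V)\Dlt t/\hb$; translating this into a bound on the Husimi transforms via the (Lipschitz-in-Wasserstein) dependence of $W_\hb$ and the smoothing $e^{\hb\Dlt/4}$ yields
$$
\Dist_1(\tilde W_\hb(R^n_\hb),\tilde W_\hb(R_\hb(n\Dlt t)))\le W_1(\cdots)\le K'(T,V)\frac{\Dlt t}{\hb}.
$$
Combining, for every $\hb>0$ and every admissible $\eta$,
$$
\Dist_1(\cdots)\le\min\left(\frac{\Phi(\Dlt t,\hb)^2}{\eta}+\eta,\ \frac{K'\Dlt t}{\hb}\right).
$$

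The heart of the argument is then the optimization over the two free parameters $\hb$ (over which we do \emph{not} have control — we must get a bound valid for \emph{all} $\hb$) and $\eta$. Treating $\hb$ as the worst case: if $\hb$ is large, $\Phi^2/\eta+\eta\approx d\hb/\eta+\eta$ is small only when $\hb$ is small, while $K'\Dlt t/\hb$ is small when $\hb$ is large; balancing $\hb/\eta$ against $\Dlt t/\hb$ and then against $\eta$ suggests the choice $\eta\sim\Dlt t^{1/3}$, $\hb\sim\Dlt t^{1/3}$ at the crossover, giving the claimed rate $\Dlt t^{1/3}$. Concretely I would set $\eta:=\Dlt t^{1/3}$ and split into the cases $\hb\le\Dlt t^{1/3}$ (use the first bound: $\Phi^2/\eta\lesssim(\Dlt t^2+\hb)/\Dlt t^{1/3}\lesssim\Dlt t^{1/3}$, since $\Dlt t<\tfrac12$ forces $\Dlt t^2\le\Dlt t^{1/3}\cdot\Dlt t^{1/3}$ etc., plus $\eta=\Dlt t^{1/3}$) and $\hb>\Dlt t^{1/3}$ (use the second bound: $K'\Dlt t/\hb<K'\Dlt t^{2/3}\le K'\Dlt t^{1/3}$). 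In both cases the total is $\le 2C[T,V,\mu^{in}]\Dlt t^{1/3}$ with $C[T,V,\mu^{in}]$ assembled from $C_T$, the exponential factor, and $K'(T,V)$, and manifestly independent of $\hb$; this is the content of \eqref{DefC[T,V,mu]}.

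The main obstacle I anticipate is \emph{not} the optimization (which is the routine $\Dlt t^{1/3}$ balancing above) but the translation of the fixed-$\hbar$ wave-function estimate of \cite{DescombesThalham} into an estimate on the Husimi transforms in a transport distance, \emph{and} its extension from pure states to the Töplitz mixed states allowed here. One must check that: (i) the $\hbar=1$ splitting error estimate of \cite{DescombesThalham} is really of the form $K(T,V)\Dlt t/\hb$ after reinstating $\hbar$ by the scaling $x\mapsto x/\sqrt\hb$ (the number of steps $[T/\Dlt t]$ and the accumulation of local errors must be tracked carefully); (ii) an $L^2$ bound $\|\psi^n-\psi(n\Dlt t)\|_{L^2}\le\delta$ on normalized wave functions implies a bound like $W_1$ or $\Dist_1$ of order $\delta$ (or $\delta\cdot(\text{moment bound})$) between the associated Husimi measures — this uses $\|\,|\psi|^2-|\phi|^2\|_{L^1}\le 2\|\psi-\phi\|_{L^2}$ together with the uniform-in-$\hbar$ second-moment control coming from $R^{in}\in\cD^2(\fH)$ and Theorem \ref{T-PropaEstim}; and (iii) for a Töplitz initial datum one can reduce to pure states by writing $R^{in}$ as a convex combination (integral) of coherent-state projectors and using convexity of $\Dist_1$ in each argument. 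Point (ii) is where one must be slightly careful that no hidden $\hbar$-dependence creeps in through the second moments of the Husimi function.
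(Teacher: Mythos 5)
Your overall strategy is the paper's: combine the semiclassical $\MKd$ estimate of Theorem \ref{T-SSplit} with the fixed-$\hbar$ splitting estimate of Theorem 2 in \cite{DescombesThalham}, pass both to $\Dist_1$, and take the worse of the two regimes in $\hbar$. However, your concrete optimization step fails in a range of $\hbar$. The correct balance is $\sqrt{\hb}\sim\Dlt t/\hb$, i.e. the crossover $\hb\sim\Dlt t^{2/3}$ (the ``worst regime'' the paper identifies), not $\hb\sim\Dlt t^{1/3}$. With your choices $\eta=\Dlt t^{1/3}$ and the case split at $\hb\le\Dlt t^{1/3}$, in the band $\Dlt t^{2/3}\ll\hb\le\Dlt t^{1/3}$ your first bound gives $\Phi^2/\eta+\eta\approx d\hb/\Dlt t^{1/3}+\Dlt t^{1/3}$, which for $\hb=\Dlt t^{1/3}$ is $O(1)$, not $O(\Dlt t^{1/3})$; the chain ``$\Phi^2/\eta\lesssim(\Dlt t^2+\hb)/\Dlt t^{1/3}\lesssim\Dlt t^{1/3}$'' implicitly uses $\hb\lesssim\Dlt t^{2/3}$, which your case hypothesis does not provide. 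The fix is simple and also removes an unnecessary detour: since $\Dist_1\le W_1\le\MKd$ (the truncated cost is $\le$ the distance, then Jensen), you can drop the $\eta$-regularization entirely, bound $\Dist_1$ by $\min\bigl(\MKd\text{-bound of Theorem \ref{T-SSplit}},\,K'\Dlt t/\hb\bigr)$, and split at $\hb=\Dlt t^{2/3}$: for $\hb\le\Dlt t^{2/3}$ the first term is $\le C_T\Dlt t+2\sqrt{d}\,(1+e^{\frac12T(1+\max(1,\Lip(\grad V)^2))})\Dlt t^{1/3}$, and for $\hb>\Dlt t^{2/3}$ the second is $<K'\Dlt t^{1/3}$. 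This is exactly the paper's $\min(\Dlt t/\hb,\sqrt{\hb})\le\Dlt t^{1/3}$, and $\Dlt t\le\Dlt t^{1/3}$ gives the factor $2$.

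On the step you single out as the main obstacle (turning the fixed-$\hbar$ $L^2$ estimate into a $\Dist_1$ bound for T\"oplitz data), the paper's route is more direct than your sketch and dissolves your worry about hidden $\hbar$-dependence through moments. It never passes through $W_1$: writing $R^{in}=\Op^T_\hb((2\pi\hb)^d\mu^{in})$, one expresses $R^n_\hb-R_\hb(n\Dlt t)$ as an integral against $\mu^{in}$ of $\bigl((\cU_V(\Dlt t)\cU_K(\Dlt t))^n-\cU(n\Dlt t)\bigr)$ acting on coherent-state projectors, which gives the trace-norm bound $\|R^n_\hb-R_\hb(n\Dlt t)\|_1\le 2\int\bigl\|\bigl((\cU_V(\Dlt t)\cU_K(\Dlt t))^n-\cU(n\Dlt t)\bigr)|q,p\ra\bigr\|_{L^2}\mu^{in}(dqdp)$; Theorem 2 of \cite{DescombesThalham} is then applied directly to the coherent states, whose $H^1$ norms satisfy $\hbar\|\,|q,p\ra\|_{H^1}\lesssim|p|+d+\hbar$, producing the nonuniform $O(\Dlt t/\hb)$ bound \eqref{NonUnifSimpleSplit} with the $\int|p|\,\mu^{in}(dqdp)$ term in the constant. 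One then uses $\Dist_1(\tilde W_\hb(R),\tilde W_\hb(S))\le\|\tilde W_\hb(R)-\tilde W_\hb(S)\|_{L^1}\le\|R-S\|_1$ (Lemmas 8.2 and 8.1 of \cite{FGPaulPulvi}): because the $\Dist_1$ cost is truncated at $1$, an $L^1$/total-variation bound suffices and no second-moment control of the Husimi functions enters, so your item (ii) is not an issue, and your item (iii) (convex decomposition over coherent states) is exactly what the operator-level computation implements. With these two repairs your argument coincides with the paper's proof.
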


\smallskip
Likewise, Theorem \ref{T-StrangSplit} and the error estimate in Theorem 3 of \cite{DescombesThalham} lead to the following statement.

\begin{Cor}\lb{C-StrangSplitU}
Let $V\in W^{4,\infty}(\bR^d)$ satisfy \eqref{HypV}, and let $R^{in}=\Op^T_{\hbar}((2\pi\hbar)^d\mu^{in})$, where $\mu^{in}$ is a Borel probability measure on $\bR^{2d}$ such that
$$
\int_{\bR^{2d}}(|q|^2+|p|^2)\mu^{in}(dqdp)<\infty\,.
$$
Let $t\mapsto R_{\hbar}(t)$ be the solution of the Cauchy problem \eqref{CPSchro}, and let $R_{\hbar}^n$ be the sequence of density operators constructed by the Strang splitting method \eqref{SSplitt}. Let $T>0$, and pick a time step 
$\Dlt t\in(0,\tfrac12)$. Then, for each $n=0,\ldots,[T/\Dlt t]$, the simple splitting method satisfies the following uniform in $\hbar$ error estimate:
$$
\Dist_1(\tilde W_{\hbar}(R_{\hbar}^n),\tilde W_{\hbar}(R_{\hbar}(n\Dlt t)))\le 2D[T,V,\mu^{in}]\Dlt t^{2/3}\,,
$$
where $D[T,V,\mu^{in}]$ is defined in \eqref{DefD[T,V,mu]}. In particular, the constant $D[T,V,\mu^{in}]$ is independent of $\hbar$.
\end{Cor}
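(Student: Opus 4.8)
The plan is to combine Theorem \ref{T-StrangSplit}, which controls $\MKd(\tilde W_\hb(R^n),\tilde W_\hb(R(n\Dlt t)))$ by $D_T\Dlt t^2+O(\sqrt{\hb})$, with the fixed-$\hbar$ (equivalently $\hbar=1$) error estimate of Theorem 3 in \cite{DescombesThalham}, which gives an $L^2$-type bound of order $(\Dlt t/\hbar)^2$ for the Strang splitting applied to the Schr\"odinger equation. Each of these two bounds is useful only in one regime: the first when $\hbar$ is small compared to $\Dlt t$ (say $\hbar\lesssim\Dlt t^{2/3}$), the second when $\hbar$ is not too small. The distance $\Dist_1$, being a truncated $L^1$-Wasserstein distance, is dominated both by $\MKd$ (via Cauchy--Schwarz, $\Dist_1\le\MKd$) and, after passing from wave functions to Husimi transforms, by the $L^2$ operator-norm distance between density operators (up to an $O(\sqrt\hbar)$ correction coming from Husimi smoothing, controlled via Proposition \ref{P-EWass}). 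So the first concrete step is to record these two comparison inequalities: (i) $\Dist_1(\tilde W_\hb(R^n),\tilde W_\hb(R(n\Dlt t)))\le\MKd(\tilde W_\hb(R^n),\tilde W_\hb(R(n\Dlt t)))\le D_T\Dlt t^2+2\sqrt{d\hb}(1+e^{T(\cdots)/2})$; and (ii) a bound of the form $\Dist_1(\tilde W_\hb(R^n),\tilde W_\hb(R(n\Dlt t)))\le C'\|R^n-R(n\Dlt t)\|_{\cL^?}+C''\sqrt\hbar\le C'''(\Dlt t/\hbar)^2+C''\sqrt\hbar$, the operator-norm piece being exactly what \cite{DescombesThalham} supplies (after rescaling their $\hbar=1$ estimate by the semiclassical change of variables).

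Next I would optimize over the two bounds. Writing schematically $\Dist_1\le\min\big(A\Dlt t^2+B\sqrt\hbar,\ A'(\Dlt t/\hbar)^2+B\sqrt\hbar\big)$ and balancing, the crossover happens at $\hbar\sim\Dlt t^{2/3}$: for $\hbar\le\Dlt t^{2/3}$ use bound (i), which gives $O(\Dlt t^2+\sqrt\hbar)=O(\Dlt t^{1/3})$; for $\hbar\ge\Dlt t^{2/3}$ use bound (ii), which gives $O((\Dlt t/\hbar)^2)=O(\Dlt t^{2/3})$. Taking the worse of the two exponents yields the claimed uniform rate $\Dlt t^{2/3}$, with a constant $D[T,V,\mu^{in}]$ assembled from $D_T$, the Lipschitz data of $\grad V$, the constants in Theorem 3 of \cite{DescombesThalham} (which involve $\|\grad^mV\|_{L^\infty}$ for $m\le 4$ and the second moment of $\mu^{in}$, transported along the flow and under the splitting), and the $W^{4,\infty}$ hypothesis on $V$; this is the definition to be pinned down in \eqref{DefD[T,V,mu]}. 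One should be slightly careful that the splitting scheme \eqref{SSplitt} referenced in the corollary statement matches the Strang scheme \eqref{StrSplitt} actually analyzed (there is an evident cross-reference to reconcile), and that Theorem 3 of \cite{DescombesThalham} applies to the von Neumann equation for Toplitz initial data and not merely to pure-state Schr\"odinger data — but since $R^{in}=\Op^T_\hb((2\pi\hbar)^d\mu^{in})$ is a convex combination (integral) of rank-one projections, the von Neumann flow and both splittings act linearly, so the mixed-state estimate follows from the pure-state one by integrating against $\mu^{in}$ and using convexity of the relevant norms.

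The main obstacle is the passage from the $\cL^2$ (Hilbert--Schmidt / $L^2$ wave-function) estimate of \cite{DescombesThalham} to a bound on $\Dist_1$ between Husimi transforms. A control on $\|R^n-R(n\Dlt t)\|_{\cL^2}$ does not by itself control any transport distance between the Husimi functions, because Husimi functions need not be close in total variation when the underlying operators are close in Hilbert--Schmidt norm with a small trace-norm defect. The resolution I would pursue: since both $R^n$ and $R(n\Dlt t)$ are obtained from the Toplitz (hence trace-class, trace-norm-bounded) initial datum by trace-norm-contractive or trace-norm-bounded propagators, one has an a priori trace-norm bound, and one interpolates between the $\cL^1$ a priori bound and the $\cL^2$ smallness to get $\|R^n-R(n\Dlt t)\|_{\cL^1}\lesssim(\Dlt t/\hbar)^{2}\cdot(\text{moment factors})$ up to an $\hbar$-power loss; then $\|\tilde W_\hb(R^n)-\tilde W_\hb(R(n\Dlt t))\|_{L^1(\bR^{2d})}\le\|R^n-R(n\Dlt t)\|_{\cL^1}$ (the Husimi transform being an $L^1$-contraction from $\cL^1$, up to the normalizing $(2\pi\hbar)^{-d}$ which cancels against the measure-theoretic normalization), and finally $\Dist_1\le\|\cdot\|_{TV}\le\|\tilde W_\hb(R^n)-\tilde W_\hb(R(n\Dlt t))\|_{L^1}$ since the truncated cost is $\le 1$. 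Tracking exactly how many powers of $\hbar$ are lost in the interpolation, and checking that they are absorbed on the favourable side of the crossover $\hbar\sim\Dlt t^{2/3}$ (so that the final rate is genuinely $\Dlt t^{2/3}$ and not something worse like $\Dlt t^{1/2}$), is the delicate bookkeeping; if the naive interpolation is too lossy, one instead uses the second-moment bounds on $\tilde W_\hb(R^n)$ and $\tilde W_\hb(R(n\Dlt t))$ (uniform in $n$ and $\hbar$ by the propagation estimates) together with a Chebyshev/tightness argument to upgrade an $L^2$ or weak-$*$ closeness to a $\Dist_1$ bound, which is the standard trick for passing from weak convergence to $\Dist_1$ convergence and is the reason $\Dist_1$ was singled out in \cite{FGPaulPulvi}.
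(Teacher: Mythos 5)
Your overall architecture is the same as the paper's (control $\Dist_1$ by the minimum of the semiclassical bound of Theorem \ref{T-StrangSplit} and the fixed-$\hbar$ Strang estimate of \cite{DescombesThalham}, using $\Dist_1\le\MKd$ on one side and $\Dist_1\le\|\cdot\|_{L^1}$ of the Husimi difference $\le$ trace norm on the other), but the quantitative bookkeeping as you wrote it does not produce $\Dlt t^{2/3}$. With your form of the Descombes--Thalhammer bound, $O((\Dlt t/\hbar)^2)$, the best uniform rate obtainable from $\min\big(A(\Dlt t/\hbar)^2,\,B\sqrt{\hbar}\big)$ is $O(\Dlt t^{2/5})$, attained at $\hbar\sim\Dlt t^{4/5}$; moreover your own two-regime argument is internally inconsistent: for $\hbar\le\Dlt t^{2/3}$ your bound (i) only gives $O(\sqrt{\hbar})=O(\Dlt t^{1/3})$, and the worse of $\Dlt t^{1/3}$ and $\Dlt t^{2/3}$ is $\Dlt t^{1/3}$, so ``taking the worse of the two exponents'' yields $\Dlt t^{1/3}$, not the claimed $\Dlt t^{2/3}$. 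The paper gets $\Dlt t^{2/3}$ because Theorem 3 of \cite{DescombesThalham} is applied to coherent-state data in the sharper form $\|(\cS(\Dlt t)^n-\cU(n\Dlt t))|q,p\ra\|_{L^2}\le M'[T,V,\mu^{in}]\,\Dlt t^2/\hbar$ (only one power of $\hbar$ is lost, precisely because $\hbar\||q,p\ra\|_{H^1}=O(|p|)$ and $\hbar^2\||q,p\ra\|_{H^2}=O(|p|^2)$), and then $\min(\Dlt t^2/\hbar,\sqrt{\hbar})\le\Dlt t^{2/3}$, the worst case being $\hbar\sim\Dlt t^{4/3}$; the factor $2$ in the corollary absorbs the remaining $O(\Dlt t^2)$ term.

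The second genuine gap is your passage from the $L^2$ propagator estimate to $\Dist_1$. The interpolation you propose between the a priori $\cL^1$ bound and $\cL^2$ smallness cannot yield trace-norm smallness: there is no inequality bounding $\|\cdot\|_1$ by $\|\cdot\|_2$ without rank (or phase-space volume) control, and the effective rank here is $O(\hbar^{-d})$, so that route loses a factor $\hbar^{-d/2}$ and destroys the rate, while the Chebyshev/tightness fallback gives no rate at all. The paper sidesteps the issue entirely by exploiting the T\"oplitz structure at the operator level: writing $R^n_\hbar-R_\hbar(n\Dlt t)$ as the sum of two integrals over $\mu^{in}$ of rank-one operators, each containing the factor $(\cS(\Dlt t)^n-\cU(n\Dlt t))|q,p\ra$ paired with a unit vector, gives directly
\begin{equation*}
\|R^n_\hbar-R_\hbar(n\Dlt t)\|_1\le 2\int_{\bR^d\times\bR^d}\left\|(\cS(\Dlt t)^n-\cU(n\Dlt t))|q,p\ra\right\|_{L^2(\bR^d)}\mu^{in}(dqdp)\,,
\end{equation*}
i.e.\ the coherent-state $L^2$ estimate upgrades to a trace-norm estimate with no loss; then Lemmas 8.1--8.2 of \cite{FGPaulPulvi} give $\Dist_1(\tilde W_\hbar(R^n_\hbar),\tilde W_\hbar(R_\hbar(n\Dlt t)))\le\min\big(\|R^n_\hbar-R_\hbar(n\Dlt t)\|_1,\MKd(\tilde W_\hbar(R^n_\hbar),\tilde W_\hbar(R_\hbar(n\Dlt t)))\big)$, with no additional $\sqrt{\hbar}$ correction needed on the trace-norm side. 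Your remark that the citation of \eqref{SSplitt} in the statement should read \eqref{StrSplitt} is correct but immaterial to the proof.
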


\smallskip
As will be clear from the proofs, the uniform in $\hbar$ estimates obtained in these two corollaries involve the $O(\hbar^{1/2})$ term in the convergence rates in Theorems \ref{T-SSplit} and \ref{T-StrangSplit}, and the nonuniform bounds 
in Theorems 2 and 3 resp. of \cite{DescombesThalham}. 

Specifically, the uniform $O(\Dlt t^{1/3})$ error bound in Corollary \ref{C-SSplitU} is obtained as the minimum of the $O(\hbar^{1/2})$ term in Theorem \ref{T-SSplit} and of the (nonuniform) $O(\Dlt t/\hbar)$ error bound in Theorem 2 of 
\cite{DescombesThalham}. This $O(\Dlt t^{1/3})$ uniform error estimate corresponds to the ``worst'' possible distinguished asymptotic regime $\hbar\sim\Dlt t^{2/3}$. Although the $O(\Dlt t)$ term in Theorem \ref{T-SSplit} is smaller than
the $O(\Dlt t^{1/3})$ uniform error estimate in Corollary \ref{C-SSplitU}, the $O(\Dlt t+\hbar^{1/2})$ error estimate in Theorem \ref{T-SSplit} is still of independent interest in all cases where $\hbar$ is small and satisfies $\hbar=O(\Dlt t^2)$.

Likewise, the uniform $O(\Dlt t^{2/3})$ error bound in Corollary \ref{C-StrangSplitU} comes as the minimum of the $O(\hbar^{1/2})$ term in the convergence rates in Theorem \ref{T-StrangSplit} and of the (nonuniform) $O(\Dlt t^2/\hbar^2)$ 
error bound in Theorem 3 of \cite{DescombesThalham}. In this case, the ``worst'' possible distinguished asymptotic regime is $\hbar\sim\Dlt t^{4/5}$. Here again, the $O(\Dlt t^2)$ term in Theorem \ref{T-StrangSplit} is smaller than the 
$O(\Dlt t^{2/3})$ uniform error estimate in Corollary \ref{C-StrangSplitU}. Nevertheless, the $O(\Dlt t^2+\hbar^{1/2})$ error estimate in Theorem \ref{T-StrangSplit} is of interest independently of the uniform $O(\Dlt t^{2/3})$ bound in
Corollary \ref{C-StrangSplitU} whenever $\hbar$ is small and satisfies $\hbar=O(\Dlt t^4)$. Observe that the Strang splitting method is of second order in time in that regime, for the quantum dynamics as well as for the classical dynamics.

%%%%%%%%%%%%%%%%%%%%%%%%%%%%%%%%%%%%%%%%%%%%%%%%%%%%%%%%%%%%%%%%%%%%%%%%%%%%%%%%%%%%%%%%%%%%%%%%%%%%%%%%%%%%%%%%%%%%%%%%%

\section{The Simple Splitting Algorithm}

%%%%%%%%%%%%%%%%%%%%%%%%%%%%%%%%%%%%%%%%%%%%%%%%%%%%%%%%%%%%%%%%%%%%%%%%%%%%%%%%%%%%%%%%%%%%%%%%%%%%%%%%%%%%%%%%%%%%%%%%%

\subsection{The Simple Splitting Algorithm for the von Neumann Equation in the Semiclassical Regime}

%%%%%%%%%%%%%%%%%%%%%%%%%%%%%%%%%%%%%%%%%%%%%%%%%%%%%%%%%%%%%%%%%%%%%%%%%%%%%%%%%%%%%%%%%%%%%%%%%%%%%%%%%%%%%%%%%%%%%%%%%

In this subsection we estimate the error between the time split von Neumann and the time split Liouville equations. By analogy with the simple splitting method \eqref{SSplitt} for the von Neumann equation, consider the simple time-splitting 
method for the Liouville equation:
$$
\left\{
\ba
{}&f^0=f^{in}\,,
\\
&\d_ta+\{\tfrac12|\xi|^2,a\}=0\,,&&\qquad a\rstr_{t=0}=f^n\,,\qquad n\in\bN\,,
\\
&\d_tb+\{V(x),b\}=0\,,&&\qquad b\rstr_{t=0}=a(\Dlt t)\,,
\\
&f^{n+1}=b(\Dlt t)\,.
\ea
\right.
$$

Applying Theorem \ref{T-PropaEstim} to one time step of the free dynamics, i.e. with $V\equiv 0$ and $\l=1$ shows that
$$
E_\hb(a(\Dlt t),A_\hb(\Dlt t))\le E_\hb(f^n,R^n)\exp(\tfrac12\Dlt t)\,.
$$
Next we apply the same Theorem \ref{T-PropaEstim} to the Hamiltonian dynamics defined by the potential $V$, with $\l=0$: thus
$$
\ba
E_\hb(f^{n+1},R^{n+1})=&E_\hb(b(\Dlt t),B_\hb(\Dlt t))
\\
\le&E_\hb(a(\Dlt t),A_\hb(\Dlt t))\exp(\tfrac12\Dlt t\max(1,\Lip(\grad V)^2))\,.
\ea
$$
Putting both estimates together shows that
$$
E_\hb(f^{n+1},R^{n+1})\le E_\hb(f^n,R^n)\exp(\tfrac12\Dlt t(1+\max(1,\Lip(\grad V)^2)))\,.
$$

Let $T>0$; then for each $n=0,\ldots,[T/\Dlt t]+1$, one has
\be\lb{SchroLiouvSplitt}
\ba
E_\hb(f^n,R^n)\le&E_\hb(f^{in},R^{in})\exp(\tfrac12n\Dlt t(1+\max(1,\Lip(\grad V)^2)))
\\
\le&E_\hb(f^{in},R^{in})\exp(\tfrac12T(1+\max(1,\Lip(\grad V)^2)))\,.
\ea
\ee
Observe that the amplification rate $\exp(\tfrac12T(1+\max(1,\Lip(\grad V)^2)))$ in this estimate is uniform in (i.e. independent of) $\hbar$. This is the key point in our analysis.

\subsection{The Simple Splitting Algorithm for the Liouville Equation}

%%%%%%%%%%%%%%%%%%%%%%%%%%%%%%%%%%%%%%%%%%%%%%%%%%%%%%%%%%%%%%%%%%%%%%%%%%%%%%%%%%%%%%%%%%%%%%%%%%%%%%%%%%%%%%%%%%%%%%%%%

In this subsection, we estimate the distance between the classical Liouville equation and its time split approximation. The error analysis for the simple splitting method is well known in general. However, for our purpose in this paper, we shall 
formulate this error analysis for this splitting method applied to the Liouville equation in terms of the quadratic Monge-Kantorovich or Wasserstein distance.

One expresses the solutions $a$ and $b$ of the kinetic and the potential part of the Liouville evolution as follows, by using the method of characteristics. For the kinetic phase
$$
a(t,y,\eta)=f^n(K_t(y,\eta))\qquad\hbox{ where }K_t(y,\eta):=(y-t\eta,\eta)\,.
$$
As for the potential phase
$$
b(t,y,\eta)=a(\Dlt t,P_t(y,\eta))\qquad\hbox{ where }P_t(y,\eta):=(y,\eta+t\grad V(y))
$$
Hence, one step of simple splitting corresponds to setting
\be\lb{fnfn+1SS}
f^{n+1}=f^n\circ P_{\Dlt t}\circ K_{\Dlt t}\,,
\ee
with 
$$
(y,\eta)\mapsto P_{\Dlt t}\circ K_{\Dlt t}(y,\eta)=(y-\Dlt t\eta,\eta+\Dlt t\grad V(y-\Dlt t\eta))
$$
Since the transformation $P_{\Dlt t}\circ K_{\Dlt t}$ has Jacobian one, the formula \eqref{fnfn+1SS} means that $f^{n+1}(y,\eta)dyd\eta$ is the image of the measure $f^n(y,\eta)dyd\eta$ by $P_{\Dlt t}\circ K_{\Dlt t}$.

\bigskip
Next we seek to estimate the splitting error
$$
\ba
\MKd(f^{in}\circ\Phi_{-(n+1)\Dlt t},f^{in}\circ(P_{\Dlt t}\circ K_{\Dlt t})^{n+1})^2
\\
\le\int|X(-\Dlt t,x,\xi)-(y-\Dlt t\eta)|^2q^n(dxd\xi dyd\eta)
\\
+\int|\Xi(-\Dlt t,x,\xi)-(\eta+\Dlt t\grad V(y-\Dlt t\eta))|^2q^n(dxd\xi dyd\eta)
\ea
$$
where $q^n$ is \textit{any} coupling of $f(n\Dlt t,\cdot,\cdot)$ and $f^n$. 

\bigskip
For this, we seek to bound
$$
|(X,\Xi)(-t,x,\xi)-(Y,H)(-t,y,\eta)|^2\,,
$$
where
$$
(Y,H)(-t,y,\eta)=P_t\circ K_t(y,\eta)=(y-t\eta,\eta+t\grad V(y-t\eta))
$$
is the numerical particle trajectory. First we derive the dynamic equations for $(Y, H)$. Inverting these relations, and denoting $Y_t:=Y(t;y,\eta)$ and $H_t:=H(t;y,\eta)$ for simplicity, we see that
$$
(y,\eta)=(Y_{-t}+t(H_{-t}-t\grad V(Y_{-t})),H_{-t}-t\grad V(Y_{-t}))\,.
$$
Equivalently
$$
(y,\eta)=(Y_t-t(H_t+t\grad V(Y_t)),H_t+t\grad V(Y_t))\,.
$$
Differentiating in time, we find that
$$
\ba
\dot Y=&H+t\dot H+2t\grad V(Y)+t^2\grad^2V(Y)\dot Y\,,
\\
\dot H=&-\grad V(Y)-t\grad^2V(Y)\dot Y\,,
\ea
$$
or equivalently
$$
\ba
\dot Y=&H+t(-\grad V(Y)-t\grad^2V(Y)\dot Y)+2t\grad V(Y)+t^2\grad^2V(Y)\dot Y
\\
=&H+t\grad V(Y)\,,
\\
\dot H=&-\grad V(Y)-t\grad^2V(Y)\cdot H-t^2\grad^2V(Y)\cdot\grad V(Y)\,.
\ea
$$
Thus, we seek to compare the trajectories of the two following differential systems:
$$
\left\{
\ba
\dot X&=\Xi\,,
\\ \dot\Xi&=-\grad V(X)\,,
\ea
\right.
\quad\hbox{ and }\quad
\left\{
\ba\dot Y&=H+t\grad V(Y)\,,
\\ 
\dot H&=-\grad V(Y)-t\grad^2V(Y)\cdot H-t^2\grad^2V(Y)\cdot\grad V(Y)\,.
\ea
\right.
$$

Therefore, we set
$$
\left\{
\ba
(\dot X-\dot Y)&=(\Xi-H)-z(t)\,,
\\ 
(\dot\Xi-\dot H)&=-(\grad V(X)-\grad V(Y))-\zeta(t)\,,
\ea
\right.
$$
with
$$
\left\{
\ba 
z(t)&:=t\grad V(Y)=t\grad V(y+t\eta)\,,
\\ 
\zeta(t)&=-t\grad^2V(y+t\eta)\cdot(\eta-t\grad V(y+t\eta))-t^2\grad^2V(y+t\eta)\cdot\grad V(y+t\eta)\,.
\ea
\right.
$$

Set $E:=|\grad V(0)|$. Then, by the mean value inequality,
$$
\left\{
\ba |z(t)|\le&|t|(E+\|\grad^2V\|_{L^\infty}(|y|+|t||\eta|))\,,
\\ 
|\zeta(t)|\le&\|\grad^2V\|_{L^\infty}|t|(|\eta|+|t|(E+\|\grad^2V\|_{L^\infty}(|y|+|t||\eta|)))
\\
&+t^2\|\grad^2V\|_{L^\infty}(E+\|\grad^2V\|_{L^\infty}(|y|+t\eta))\,.
\ea
\right.
$$
On the other hand
$$
\left\{
\ba
{}&\frac{d}{dt}\tfrac12|X-Y|^2=(X-Y)\cdot(\Xi-H)-z(t)\cdot(X-Y)\,,
\\
&\frac{d}{dt}\tfrac12|\Xi-H|^2=-(\grad V(X)-\grad V(Y))\cdot(\Xi-H)-\zeta(t)\cdot(\Xi-H)\,,
\ea
\right.
$$
so that
$$
\left\{
\ba
{}&\frac{d}{dt}|X-Y|^2\le|X-Y|^2+|\Xi-H|^2+|z(t)|^2+|X-Y|^2\,,
\\
&\frac{d}{dt}|\Xi-H|^2\le\|\grad^2V\|_{L^\infty}(|X-Y|^2+|\Xi-H|^2)+|\zeta(t)|^2+|\Xi-H|^2\,,
\ea
\right.
$$

By Gronwall's inequality, setting $\L:=\max(1,E,\|\grad^2V\|_{L^\infty}))$, one has
$$
\ba
|X-Y|^2(t)+|\Xi-H|^2(t)\le&(|x-y|^2+|\xi-\eta|^2)e^{(2+\L)|t|}
\\
&+\frac{e^{(2+\L)|t|}-1}{2+\L}\sup_{-|t|<s<|t|}(|z(s)|^2+|\zeta(s)|^2)\,.
\ea
$$

Assume that $0\le t\le\tfrac12$ for simplicity; then%, denoting $M=\max(E,\|\grad^2V\|_{L^\infty})$
$$
\sup_{|s|\le t}(|z(s)|^2+|\zeta(s)|^2)\le\tfrac94\L^2(\tfrac12+\L)^2t^2(1+|y|^2+|\eta|^2)\,.
$$

Choosing at this point an \textit{optimal} coupling $q^n$ of $f(n\Dlt t,\cdot,\cdot)$ and $f^n$ (see Theorem 1.3 in \cite{VillaniAMS} for the existence of an optimal coupling), one has
$$
\ba
\MKd(f((n+1)\Dlt t,\cdot,\cdot),f^{n+1})^2=\int(|X-Y|^2+|\Xi-H|^2)q^{n+1}(dXd\Xi dYdH)
\\
=
\int(|X(-\Dlt t;x,\xi)-Y(-\Dlt t;y,\eta)|^2+|\Xi(-\Dlt t;x,\xi)-H(-\Dlt t;y,\eta)|^2)q^n(dxd\xi dyd\eta)
\\
\le e^{(2+\L)\Dlt t}\int(|x-y|^2+|\xi-\eta|^2)q^n(dxd\xi dyd\eta)
\\
+\tfrac94\L^2(\tfrac12+\L)^2\Dlt t^2\frac{e^{(2+\L)\Dlt t}-1}{2+\L}\left(1+\int(|y|^2+|\eta|^2)f^n(y,\eta)dyd\eta\right)
\\
=\MKd(f(n\Dlt t,\cdot,\cdot),f^n)^2e^{(2+\L)\Dlt t}
\\
+\tfrac94\L^2(\tfrac12+\L)^2\Dlt t^2\frac{e^{(2+\L)\Dlt t}-1}{2+\L}\left(1+\int(|y|^2+|\eta|^2)f^n(y,\eta)dyd\eta\right)
\ea
$$
and it remains to control the last term in the right hand side. 

Since $f^n(y,\eta)dyd\eta$ is the image of the measure $f^{n-1}(y,\eta)dyd\eta$ by the transformation $P_{\Dlt t}\circ K_{\Dlt t}$, one has
$$
\int(|y|^2+|\eta|^2)f^n(y,\eta)dyd\eta=\int(|y+\Dlt t\eta|^2+|\eta-\Dlt t\grad V(y+\Dlt t\eta)|^2)f^{n-1}(y,\eta)dyd\eta
$$
(by substitution in the left hand side), so that
$$
\ba
|y+\Dlt t\eta|^2+|\eta-\Dlt t\grad V(y+\Dlt t\eta)|^2\le(|y|^2+\Dlt t(|y|^2+|\eta|^2)+\Dlt t^2|\eta|^2)
\\
+(1+\Dlt t)|\eta|^2+2(\Dlt t^2+\Dlt t)(E^2+\L^2(|y|^2+\Dlt t(|y|^2+|\eta|^2)+\Dlt t^2|\eta|^2))
\\
\le(|y|^2+|\eta|^2)(1+\Dlt t)^2(1+2\L^2\Dlt t(1+\Dlt t))+2\Dlt t(1+\Dlt t)E^2&\,.
\ea
$$
Denoting 
$$
\mu_n:=\int(|y|^2+|\eta|^2)f^n(y,\eta)dyd\eta\,,
$$
we easily check that
$$
\mu_n\le(1+\Dlt t+2\L^2\Dlt t(1+\Dlt t)^2)(1+\Dlt t)\mu_{n-1}+2\Dlt t(1+\Dlt t)E\,.
$$
Therefore
$$
\ba
\mu_n\le(1+\Dlt t+2\L^2\Dlt t(1+\Dlt t)^2)^n(1+\Dlt t)^n\mu_0
\\
+2\Dlt t(1+\Dlt t)E\frac{(1+\Dlt t+2\L^2\Dlt t(1+\Dlt t)^2)^n(1+\Dlt t)^n-1}{(1+\Dlt t+2\L^2\Dlt t(1+\Dlt t)^2)(1+\Dlt t)-1}
\\
\le\exp(2n\Dlt t(1+\L^2(1+\Dlt t)^2))\mu_0
\\
+2(1+\Dlt t)E\frac{\exp(2n\Dlt t(1+\L^2(1+\Dlt t)^2))-1}{1+(1+\Dlt t)(1+2\L^2(1+\Dlt t)^2)}&\,.
\ea
$$

Thus, we arrive at the inequality
$$
\ba
\MKd(f((n+1)\Dlt t,\cdot,\cdot),f^{n+1})^2\le\MKd(f(n\Dlt t,\cdot,\cdot),f^n)^2e^{(2+\L)\Dlt t}
\\
+\tfrac94\L^2(\tfrac12+\L)^2\Dlt t^2\frac{e^{(2+\L)\Dlt t}-1}{2+\L}\bigg(1+\exp(2n\Dlt t(1+\L^2(1+\Dlt t)^2))\mu_0
\\
+2(1+\Dlt t)E\frac{\exp(2n\Dlt t(1+\L^2(1+\Dlt t)^2))-1}{1+(1+\Dlt t)(1+2\L^2(1+\Dlt t)^2)}\bigg)&\,.
\ea
$$
Iterating in $n$, we conclude that, for $n=0,1,\ldots,[T/\Dlt t]$
\be\lb{LiouvSplitt}
\MKd(f(n\Dlt t,\cdot,\cdot),f^n)\le C_T\Dlt t\,,
\ee
where
\be\lb{DefCT}
\ba
C_T^2:=\tfrac94\L^2(\tfrac12+\L)^2\frac{e^{(2+\L)T}-1}{2+\L}\bigg(1+\exp(2T(1+\L^2(1+\Dlt t)^2))\mu_0
\\
+2(1+\Dlt t)E\frac{\exp(2T(1+\L^2(1+\Dlt t)^2))-1}{1+(1+\Dlt t)(1+2\L^2(1+\Dlt t)^2)}\bigg)&\,.
\ea
\ee

\subsection{Error Estimate for the Simple Splitting Method}

%%%%%%%%%%%%%%%%%%%%%%%%%%%%%%%%%%%%%%%%%%%%%%%%%%%%%%%%%%%%%%%%%%%%%%%%%%%%%%%%%%%%%%%%%%%%%%%%%%%%%%%%%%%%%%%%%%%%%%%%%

According to Theorem \ref{T-PropaEstim}, for each $n=0,1,\ldots$, one has
$$
E_\hb(f(n\Dlt t,\cdot,\cdot),R(n\Dlt t))\le E_{\hbar}(f^{in},R^{in})\exp\left(\tfrac12n\Dlt t(1+\max(1,\Lip(\grad V)^2))\right)
$$
and in particular
\be\lb{SchroLiouv}
E_\hb(f(n\Dlt t,\cdot,\cdot),R(n\Dlt t))\le E_{\hbar}(f^{in},R^{in})\exp\left(\tfrac12T(1+\max(1,\Lip(\grad V)^2))\right)
\ee
for $n=0,\ldots,[T/\Dlt t]$. Putting together \eqref{SchroLiouvSplitt}, \eqref{LiouvSplitt} and \eqref{SchroLiouv} shows that
$$
\ba
E_\hb(f^n,R^n)+\MKd(f(n\Dlt t,\cdot,\cdot),f^n)+E_\hb(f(n\Dlt t,\cdot,\cdot),R(n\Dlt t))
\\
\le 2E_{\hbar}(f^{in},R^{in})\exp\left(\tfrac12T(1+\max(1,\Lip(\grad V)^2))\right)+C_T\Dlt t&\,.
\ea
$$
According to Proposition \ref{P-EWass} (b) and using the triangle inequality for $\MKd$, we conclude that
$$
\ba
\MKd(\tilde W_\hb(R^n),\tilde W_\hb(R(n\Dlt t)))
\\
\le 2E_{\hbar}(f^{in},R^{in})\exp\left(\tfrac12T(1+\max(1,\Lip(\grad V)^2))\right)+C_T\Dlt t+2\sqrt{d\hb}&\,.
\ea
$$
In particular, if $R^{in}$ is the T\"oplitz operator with symbol $(2\pi\hbar)^df^{in}$, we conclude from Proposition \ref{P-EWass} (c) that
\be\lb{SimpleSplitErrQ}
\ba
\MKd(\tilde W_\hb(R^n),\tilde W_\hb(R(n\Dlt t)))
\\
\le C_T\Dlt t+2\sqrt{d\hb}\left(1+\exp\left(\tfrac12T(1+\max(1,\Lip(\grad V)^2))\right)\right)&\,.
\ea
\ee

%%%%%%%%%%%%%%%%%%%%%%%%%%%%%%%%%%%%%%%%%%%%%%%%%%%%%%%%%%%%%%%%%%%%%%%%%%%%%%%%%%%%%%%%%%%%%%%%%%%%%%%%%%%%%%%%%%%%%%%%%

\section{The Strang Splitting Algorithm}

%%%%%%%%%%%%%%%%%%%%%%%%%%%%%%%%%%%%%%%%%%%%%%%%%%%%%%%%%%%%%%%%%%%%%%%%%%%%%%%%%%%%%%%%%%%%%%%%%%%%%%%%%%%%%%%%%%%%%%%%%

In this subsection we estimate the error between the time split von Neumann and the time split Liouville equations. The Strang time-splitting method for the Liouville equation is
$$
\left\{
\ba
{}&f^0=f^{in}\,,
\\
&\d_ta+\{\tfrac12|\xi|^2,a\}=0\,,&&\qquad a\rstr_{t=0}=f^n\,,
\\
&\d_tb+\{V(x),b\}=0\,,&&\qquad b\rstr_{t=0}=a(\tfrac12\Dlt t)\,,\qquad n\in\bN\,,
\\
&\d_tg+\{\tfrac12|\xi|^2,g\}=0\,,&&\qquad g\rstr_{t=0}=b(n\Dlt t)\,,
\\
&f^{n+1}=g(\tfrac12\Dlt t)\,.
\ea
\right.
$$

Applying Theorem \ref{T-PropaEstim} to one time step of the free dynamics, i.e. with $V\equiv 0$ and $\l=1$ shows that
$$
E_\hb(a(\tfrac12\Dlt t),A_\hb(\tfrac12\Dlt t))\le E_\hb(f^n,R^n)\exp(\tfrac14\Dlt t)\,.
$$
Next we apply the same Theorem \ref{T-PropaEstim} to the Hamiltonian dynamics defined by the potential $V$, with $\l=0$: thus
$$
E_\hb(b(\Dlt t),B_\hb(\Dlt t))\le E_\hb(a(\tfrac12\Dlt t),A_\hb(\tfrac12\Dlt t))\exp(\tfrac12\Dlt t\max(1,\Lip(\grad V)^2))\,.
$$
Finally, we apply again Theorem \ref{T-PropaEstim} to the last time step of the free dynamics, so that
$$
\ba
E_\hb(f^{n+1},R^{n+1}_\hb)=E_\hb(g(\tfrac12\Dlt t),G_\hb(\tfrac12\Dlt t))
\le E_\hb(b(\Dlt t),B_\hb(\Dlt t))\exp(\tfrac14\Dlt t)
\\
\le E_\hb(a(\tfrac12\Dlt t),A_\hb(\frac12\Dlt t))\exp(\tfrac14\Dlt t+\tfrac12\Dlt t\max(1,\Lip(\grad V)^2))
\\
\le E_\hb(f^n,R^n)\exp(\tfrac14\Dlt t+\tfrac12\Dlt t\max(1,\Lip(\grad V)^2)+\tfrac14\Dlt t)
\\
= E_\hb(f^n,R^n)\exp(\tfrac12\Dlt t(1+\max(1,\Lip(\grad V)^2))&\,.
\ea
$$
Hence the uniform in $\hb$ estimate \eqref{SchroLiouvSplitt} also holds for the Strang splitting method.

Next we analyze the Strang splitting method for the Liouville equation in terms of the Monge-Kantorovich or Wasserstein distance. With the same notation as in the previous section, we seek to bound
$$
\MKd(f^{in}\circ\Phi_{-(n+1)\Dlt t},f^{in}\circ(K_{\frac12\Dlt t}\circ P_{\Dlt t}\circ K_{\frac12\Dlt t})^{n+1})^2\,.
$$
In order to do so, we seek to bound
$$
|(X,\Xi)(-t,x,\xi)-(Z,\Om)(-t,z,\om)|^2\,,
$$
where the numerical particle trajectory or bi-characteristic flow of the Liouville equation is
$$
\ba
(Z,\Om)(-t,z,\om)=K_{t/2}\circ P_t\circ K_{t/2}(z,\om)
\\
=(z-t\om-\tfrac12t^2\grad V(z-\tfrac12t\om)),\om+t\grad V(z-\tfrac12t\om))&\,.
\ea
$$
Writing $Z_t:=Z(t,z,\om)$ and $\Om_t:=\Om(t,z,\om)$ for simplicity, we first observe that
$$
Z_t-\tfrac12t\Om_t=z+\tfrac12t\om\,,\quad\Om_t=\om-t\grad V(Z_t-\tfrac12\Om_t)
$$

Hence
$$
\ba
\dot Z_t-\tfrac12\Om_t-\tfrac12t\dot\Om_t=\tfrac12\om
\\
\dot\Om_t+\grad V(Z_t-\tfrac12t\Om_t)+t\grad^2V(Z_t-\tfrac12t\Om_t)\frac{d}{dt}(Z_t-\tfrac12t\Om_t)
\\
=\dot\Om_t+\grad V(Z_t-\tfrac12t\Om_t)+\tfrac12t\grad^2V(Z_t-\tfrac12t\Om_t)\om=0&\,,
\ea
$$
so that
$$
\ba
0=\dot Z_t-\tfrac12(\Om_t+\om)-\tfrac12t\dot\Om_t=\dot Z_t-\Om_t-\tfrac12t\grad V(Z_t-\tfrac12t\Om_t)-\tfrac12t\dot\Om_t
\\
=\dot Z_t-\Om_t+\tfrac14t^2\grad^2V(Z_t-\tfrac12t\Om_t)\om&\,.
\ea
$$

Likewise
$$
\ba
0=\dot\Om_t+\grad V(Z_t-\tfrac12t\Om_t)+\tfrac12t\grad^2V(Z_t-\tfrac12t\Om_t)\om
\\
=\dot\Om_t+\grad V(Z_t)-\tfrac12t\grad^2V(Z_t)\Om_t
\\
+\tfrac18t^2\grad^3V(Z_t-\tfrac12\th t\Om_t):\Om_t^{\otimes 2}+\tfrac12t\grad^2V(Z_t-\tfrac12t\Om_t)\om
\\
=\dot\Om_t+\grad V(Z_t)+\tfrac12t(\grad^2V(Z_t-\tfrac12t\Om_t)-\grad^2V(Z_t))\Om_t
\\
+\tfrac12t\grad^2V(Z_t-\tfrac12t\Om_t)(\om-\Om_t)+\tfrac18t^2\grad^3V(Z_t-\tfrac12\th t\Om_t):\Om_t^{\otimes 2}
\\
=\dot\Om_t+\grad V(Z_t)+\tfrac12t(\grad^2V(Z_t-\tfrac12t\Om_t)-\grad^2V(Z_t))\Om_t
\\
+\tfrac12t^2\grad^2V(Z_t-\tfrac12t\Om_t)\grad V(Z_t-\tfrac12t\Om_t)
\\
+\tfrac18t^2\grad^3V(Z_t-\tfrac12\th t\Om_t):\Om_t^{\otimes 2}
\ea
$$

Summarizing, the numerical bi-characteristic field for the Strang splitting method is
$$
\ba
\dot Z_t=&\Om_t-\tfrac14t^2\grad^2V(Z_t-\tfrac12t\Om_t)\om
\\
=&\Om_t+s(t)
\\
\dot\Om_t=&-\grad V(Z_t)-\tfrac12t(\grad^2V(Z_t-\tfrac12t\Om_t)-\grad^2V(Z_t))\Om_t
\\
&-\tfrac12t^2\grad^2V(Z_t-\tfrac12t\Om_t)\grad V(Z_t-\tfrac12t\Om_t)
\\
&-\tfrac18t^2\grad^3V(Z_t-\tfrac12\th t\Om_t):\Om_t^{\otimes 2}
\\
=&-\grad V(Z_t)+\si(t)
\ea
$$
where
$$
\ba
s(t):=&-\tfrac14t^2\grad^2V(z+\tfrac12t\om)\om
\\
\si(t):=&-\tfrac12t(\grad^2V(z+\tfrac12t\om)-\grad^2V(z+\tfrac12t\om+\tfrac12t\Om_t))\Om_t
\\
&-\tfrac12t^2\grad^2V(z+\tfrac12t\om)\grad V(z+\tfrac12t\om)
\\
&-\tfrac18t^2\grad^3V(Z_t-\tfrac12\th t\Om_t):\Om_t^{\otimes 2}
\ea
$$

Here again, we seek to compare the solution $(Z_t,\Om_t)$ to the Strang splitting differential equation with the solution $(X_t,\Xi_t)$ of the Newton system of motion equations, i.e.
$$
\left\{
\ba
\dot X&=\Xi\,,
\\ 
\dot\Xi&=-\grad V(X)\,,
\ea
\right.
$$
Arguing as in the case of the simple splitting method, we observe that
$$
\left\{
\ba
\dot X-\dot Z&=(\Xi-\Om)-s\,,
\\ 
\dot\Xi-\dot\Om&=-(\grad V(X)-\grad V(Z))-\si(t)\,,
\ea
\right.
$$
so that
$$
\ba
\frac{d}{dt}|X-Z|^2=&2(X-Z)\cdot(\Xi-\Om)-2(X-Z)\cdot s
\\
\le&|X-Z|^2+|\Xi-\Om|^2+|X-Z|^2+|s|^2
\\
\frac{d}{dt}|\Xi-\Om|^2=&-2(\Xi-\Om)\cdot(\grad V(X)-\grad V(Z))-2(\Xi-\Om)\cdot\si
\\
\le&\Lip(\grad V)(|\Xi-\Om|^2+|X-Z|^2)+|\Xi-\Om|^2+|\si|^2
\ea
$$
One easily checks that
$$
\ba
|s(t)|^2+|\si(t)|^2\le& t^4(\tfrac16\|\grad^2V\|^2_{L^\infty}|\om|^2+\tfrac1{16}\|\grad^3V\|^2_{L^\infty}|\Om|^4
\\
&+\tfrac14\|\grad^2V\|^2_{L^\infty}\|\grad V\|^2_{L^\infty}+\tfrac1{64}\|\grad^3V\|^2_{L^\infty}|\Om|^2)
\\
\le& t^4(\tfrac16\|\grad^2V\|^2_{L^\infty}|\om|^2+\tfrac1{2}\|\grad^3V\|^2_{L^\infty}(|\om|^4+t^4\|\grad V\|^4_{L^\infty})
\\
&+\tfrac14\|\grad^2V\|^2_{L^\infty}\|\grad V\|^2_{L^\infty}+\tfrac1{32}\|\grad^3V\|^2_{L^\infty}(|\om|^2+t^2\|\grad V\|^2_{L^\infty}))\,.
\ea
$$
Setting
$$
M:=\max(1,\|\grad V\|^2_{L^\infty},\|\grad^2V\|^2_{L^\infty},\|\grad^3V\|^2_{L^\infty})
$$
we see that
$$
\ba
|s(t)|^2+|\si(t)|^2\le& t^4(\tfrac16M|\om|^2+\tfrac12M(|\om|^4+t^4M^2)+\tfrac14M^2+\tfrac1{32}M(|\om|^2+t^2M))
\\
\le&\tfrac12M^3t^4(1+t^2+t^4+|\om|^2+|\om|^4)
\ea
$$

Choosing an optimal coupling $q^n$ of $f(n\Dlt t,\cdot,\cdot)$ and $f^n$, one has
$$
\ba
\MKd(f((n+1)\Dlt t,\cdot,\cdot),f^{n+1})^2\le\int(|X-Z|^2+|\Xi-\Om|^2)q^{n+1}(dXd\Xi dZd\Om)
\\
=
\!\!\int(|X(-\Dlt t;x,\xi)\!-\!Z(-\Dlt t;z,\om)|^2\!+\!|\Xi(-\Dlt t;x,\xi)\!-\!\Om(-\Dlt t;z,\om)|^2)q^n(dxd\xi dzd\om)
\\
\le e^{(2+\L)\Dlt t}\int(|x-z|^2+|\xi-\om|^2)q^n(dxd\xi dzd\om)
\\
+\frac{e^{(2+\L)\Dlt t}-1}{2+\L}\tfrac12M^3\Dlt t^4\left(1+\Dlt t^2+\Dlt t^4+\int(|\om|^2+|\om|^4)f^n(dzd\om)\right)
\\
\le e^{(2+\L)\Dlt t}\MKd(f(n\Dlt t,\cdot,\cdot),f^n)^2
\\
+\frac{e^{(2+\L)\Dlt t}-1}{2+\L}\tfrac12M^3\Dlt t^4\left(1+\Dlt t^2+\Dlt t^4+\int(|\om|^2+|\om|^4)f^n(dzd\om)\right)
\ea
$$
Arguing as in the case of the simple splitting algorithm, one has
$$
\ba
\nu_n:=\int(|\om|^2+|\om|^4)f^n(dzd\om)=\int(|\Om(-\Dlt t;z,\om)|^2+|\Om(-\Dlt t;z,\om)|^4)f^{n-1}(dzd\om)
\\
\le\int((|\om|+\sqrt{M}\Dlt t)^2+(|\om|+\sqrt{M}\Dlt t)^4)f^{n-1}(dzd\om)
\ea
$$
by substitution in the integral on the left hand side, since $f^n(y,\eta)dyd\eta$ is the image of the measure $f^{n-1}(y,\eta)dyd\eta$ by the transformation $K_{\frac12\Dlt t}\circ P_{\Dlt t}\circ K_{\frac12\Dlt t}$. Since
$$
(|\om|+\sqrt{M}\Dlt t)^2\le|\om|^2+\Dlt t(M+|\om|^2)+M\Dlt t^2\le(1+\Dlt t)(|\om|^2+M\Dlt t)
$$
and
$$
(|\om|+\sqrt{M}\Dlt t)^4\le(1+\Dlt t)^2(|\om|^2+M\Dlt t)^2\le(1+\Dlt t)^3(|\om|^4+M^2\Dlt t)\,,
$$
one has
$$
\nu_n\le(1+\Dlt t)^3(\nu_{n-1}+M(1+M)\Dlt t)\le e^{3\Dlt t}(\nu_{n-1}+M(1+M)\Dlt t)
$$
so that
$$
\ba
\nu_n\le &e^{3n\Dlt t}\nu_0+M(1+M)\Dlt te^{3\Dlt t}\frac{e^{3n\Dlt t}-1}{e^{3\Dlt t}-1}
\\
\le &e^{3n\Dlt t}\nu_0+M(1+M)\Dlt t(e^{3n\Dlt t}-1)\,.
\ea
$$
Hence
$$
\ba
\MKd(f((n+1)\Dlt t,\cdot,\cdot),f^{n+1})^2\le e^{(2+\L)\Dlt t}\MKd(f(n\Dlt t,\cdot,\cdot),f^n)^2
\\
+\frac{e^{(2+\L)\Dlt t}-1}{2+\L}\tfrac12M^3\Dlt t^4\left(1+\Dlt t^2+\Dlt t^4+e^{3n\Dlt t}\nu_0+M(1+M)\Dlt t(e^{3n\Dlt t}-1)\right)
\ea
$$
so that, iterating in $n$, 
$$
\ba
\MKd(f(n\Dlt t,\cdot,\cdot),f^n)^2
\\
\le\frac{e^{(2+\L)n\Dlt t}-1}{2+\L}\tfrac12M^3\Dlt t^4\left(1+\Dlt t^2+\Dlt t^4+e^{3n\Dlt t}\nu_0+M(1+M)\Dlt t(e^{3n\Dlt t}-1)\right)
\\
\le\frac{e^{(2+\L)T}-1}{2+\L}M^3\Dlt t^4\left(1+e^{3T}(\nu_0+M^2)\right)
\ea
$$
for $n=0,1,\ldots,[T/\Dlt t]$ with $0<\Dlt t\le\tfrac12$. In other words
\be\lb{LiouvStrangSplitt}
\MKd(f(n\Dlt t,\cdot,\cdot),f^n)\le D_T\Dlt t^2
\ee
for $n=0,1,\ldots,[T/\Dlt t]$ with $0<\Dlt t\le\tfrac12$, with
\be\lb{DefDT}
D_T^2=\frac{e^{(2+\L)T}-1}{2+\L}M^3\left(1+e^{3T}(\nu_0+M^2)\right)\,.
\ee

Putting together \eqref{SchroLiouvSplitt}, \eqref{LiouvStrangSplitt} and \eqref{SchroLiouv} shows that
$$
\ba
E_\hb(f^n,R^n)+\MKd(f(n\Dlt t,\cdot,\cdot),f^n)+E_\hb(f(n\Dlt t,\cdot,\cdot),R(n\Dlt t))
\\
\le 2E_{\hbar}(f^{in},R^{in})\exp\left(\tfrac12T(1+\max(1,\Lip(\grad V)^2))\right)+D_T\Dlt t^2&\,.
\ea
$$
By Proposition \ref{P-EWass} (b) and the triangle inequality for $\MKd$, 
$$
\ba
\MKd(\tilde W_\hb(R^n),\tilde W_\hb(R(n\Dlt t)))
\\
\le 2E_{\hbar}(f^{in},R^{in})\exp\left(\tfrac12T(1+\max(1,\Lip(\grad V)^2))\right)+D_T\Dlt t^2+2\sqrt{d\hb}
\ea
$$
for $n=0,1,\ldots,[T/\Dlt t]$ with $0<\Dlt t\le\tfrac12$.

In particular, if $R^{in}$ is the T\"oplitz operator with symbol $(2\pi\hbar)^df^{in}$, we conclude from Proposition \ref{P-EWass} (c) and the inequality above that
\be\lb{StrangSplitErrQ}
\ba
\MKd(\tilde W_\hb(R^n),\tilde W_\hb(R(\Dlt t)))
\\
\le D_T\Dlt t^2+2\sqrt{d\hb}\left(1+\exp\left(\tfrac12T(1+\max(1,\Lip(\grad V)^2))\right)\right)&\,.
\ea
\ee

%%%%%%%%%%%%%%%%%%%%%%%%%%%%%%%%%%%%%%%%%%%%%%%%%%%%%%%%%%%%%%%%%%%%%%%%%%%%%%%%%%%%%%%%%%%%%%%%%%%%%%%%%%%%%%%%%%%%%%%%%

\section{Uniform in $\hbar$ Error Estimates}

%%%%%%%%%%%%%%%%%%%%%%%%%%%%%%%%%%%%%%%%%%%%%%%%%%%%%%%%%%%%%%%%%%%%%%%%%%%%%%%%%%%%%%%%%%%%%%%%%%%%%%%%%%%%%%%%%%%%%%%%%

\begin{proof}[Proof of Corollary \ref{C-SSplitU}] Throughout this section, we denote
$$
\cU(t):=\exp(-it(-\tfrac12\hbar^2\Dlt+V(x))/\hbar)\,,
$$
and
$$
\cU_K(t):=\exp(\tfrac12it\hbar\Dlt)\,,\qquad\cU_V(t):=\exp(-itV(x)/\hbar)\,.
$$

For the first order time splitting, one has
$$
\ba
R_{\hbar}^n-R_{\hbar}(n\Dlt t)
\\
=(\cU_V(\Dlt t)\cU_K(\Dlt t))^nR_{\hbar}^{in}(\cU_K(\Dlt t)^*\cU_V(\Dlt t)^*)^n-\cU(n\Dlt t)R_{\hbar}^{in}\cU(n\Dlt t)^*
\\
=\int_{\bR^d\times\bR^d}\left((\cU_V(\Dlt t)\cU_K(\Dlt t))^n-\cU(n\Dlt t)\right)|q,p\ra\la q,p|(\cU_K(\Dlt t)^*\cU_V(\Dlt t)^*)^n\mu^{in}(dqdp)
\\
+\int_{\bR^d\times\bR^d}\cU(n\Dlt t)|q,p\ra\la q,p|\left((\cU_K(\Dlt t)^*\cU_V(\Dlt t)^*)^n-\cU(n\Dlt t)^*\right)\mu^{in}(dqdp)\,.
\ea
$$
Hence
$$
\|R_{\hbar}^n-R_{\hbar}(n\Dlt t)\|_1\le 2\int_{\bR^d\times\bR^d}\left\|\left(\cU_V(\Dlt t)\cU_K(\Dlt t))^n-\cU(n\Dlt t)\right)|q,p\ra\right\|_{L^2(\bR^d)}\mu^{in}(dqdp)\,.
$$
At this point, we apply Theorem 2 from \cite{DescombesThalham} for the error of the simple splitting scheme:
$$
\ba
\left\|\left(\cU_V(\Dlt t)\cU_K(\Dlt t))^n-\cU(n\Dlt t)\right)|q,p\ra\right\|_{L^2(\bR^d)}
\\
\le 2\frac{\Dlt t}{\hbar}M(V)\left(M(V)t^2+\hbar\|\,|q,p\ra\|_{H^1(\bR^d)}\right)&\,,
\ea
$$
where
$$
M(V):=\max(2\|\grad V\|_{L^\infty(\bR^d)},\|\grad^2V\|_{L^\infty(\bR^d)})\,.
$$
One has
$$
\hbar^2\left(\|\,|q,p\ra\|^2_{L^2(\bR^d)}+\|\,\grad|q,p\ra\|^2_{L^2(\bR^d)}\right)=\hbar^2+|p|^2+\tfrac{d}2\hbar\le 2\hbar^2+|p|^2+d^2
$$
so that
\be\lb{NonUnifSimpleSplit}
\|R_{\hbar}^n-R_{\hbar}(n\Dlt t)\|_1\le 4\frac{\Dlt t}{\hbar}M(V)\left(M(V)t^2+\sqrt{2}\hbar+d+\int_{\bR^{2d}}|p|\mu^{in}(dqdp)\right)\,.
\ee
Next we apply Lemmas 8.2 and 8.1 in \cite{FGPaulPulvi}: 
\be\lb{Interpol}
\ba
\Dist_1(\tilde W_{\hbar}(R_{\hbar}^n),\tilde W_{\hbar}(R_{\hbar}(n\Dlt t)))
\\
\le\min(\|\tilde W_{\hbar}(R_{\hbar}^n)-\tilde W_{\hbar}(R_{\hbar}(n\Dlt t))\|_{L^1(\bR^{2d})},\MKd(\tilde W_{\hbar}(R_{\hbar}^n),\tilde W_{\hbar}(R_{\hbar}(n\Dlt t))))
\\
\le\min(\|R_{\hbar}^n-R_{\hbar}(n\Dlt t)\|_1,\MKd(\tilde W_{\hbar}(R_{\hbar}^n),\tilde W_{\hbar}(R_{\hbar}(n\Dlt t))))&\,.
\ea
\ee
Using \eqref{NonUnifSimpleSplit}, \eqref{SimpleSplitErrQ} to bound the right hand side of \eqref{Interpol} shows that
$$
\ba
\Dist_1(\tilde W_{\hbar}(R_{\hbar}^n),&\tilde W_{\hbar}(R_{\hbar}(n\Dlt t)))
\\
\le\min\Bigg(&4\frac{\Dlt t}{\hbar}M(V)\left(M(V)t^2+\sqrt{2}\hbar+d+\int_{\bR^{2d}}|p|\mu^{in}(dqdp)\right),
\\
&C_T\Dlt t+2\sqrt{d\hbar}\left(1+\exp\left(\tfrac12t(1+\max(1,\Lip(\grad V)^2))\right)\right)\Bigg)
\ea
$$
so that
$$
\ba
\Dist_1(\tilde W_{\hbar}(R_{\hbar}^n),\tilde W_{\hbar}(R_{\hbar}(n\Dlt t)))\le C[T,V,\mu^{in}]\left(\Dlt t+\min\left(\frac{\Dlt t}{\hbar},\sqrt{\hbar}\right)\right)
\\
=C[T,V,\mu^{in}]\left(\Dlt t+\Dlt t^{1/3}\right)
\ea
$$
with
\be\lb{DefC[T,V,mu]}
\ba
C[T,V,\mu^{in}]:=\max\Bigg(4\sqrt{2}M(V),C_T,4M(V)\left(M(V)T^2+d+\int_{\bR^{2d}}|p|\mu^{in}(dqdp)\right)
\\
2\sqrt{d}\left(1+\exp\left(\tfrac12T(1+\max(1,\Lip(\grad V)^2))\right)\right)\Bigg)&\,.
\ea
\ee
\end{proof}

\begin{proof}[Proof of Corollary \ref{C-StrangSplitU}] 
Arguing as in the proof of Corollary \ref{C-SSplitU} for the Strang splitting, we write
$$
\cS(\Dlt t):=\cU_K(\tfrac{\Dlt t}2)\cU_V(\Dlt t)\cU_K(\tfrac{\Dlt t}2)
$$
and
$$
\ba
R_{\hbar}^n-R_{\hbar}(n\Dlt t)=&\cS(\Dlt t)^nR_{\hbar}^{in}(\cS(\Dlt t)^*)^n-\cU(n\Dlt t)R_{\hbar}^{in}\cU(n\Dlt t)^*
\\
=&\int_{\bR^d\times\bR^d}(\cS(\Dlt t)^n-\cU(n\Dlt t))|q,p\ra\la q,p|(\cS(\Dlt t)^*)^n\mu^{in}(dqdp)
\\
&+\int_{\bR^d\times\bR^d}\cU(n\Dlt t)|q,p\ra\la q,p|(\cS(\Dlt t)^*)^n-\cU(n\Dlt t)^*)\mu^{in}(dqdp)\,,
\ea
$$
so that
$$
\|R_{\hbar}^n-R_{\hbar}(n\Dlt t)\|_1\le 2\int_{\bR^d\times\bR^d}\left\|(\cS(\Dlt t)^n-\cU(n\Dlt t))|q,p\ra\right\|_{L^2(\bR^d)}\mu^{in}(dqdp)\,.
$$
By Theorem 3 from \cite{DescombesThalham}:
$$
\left\|(\cS(\Dlt t)^n-\cU(n\Dlt t))|q,p\ra\right\|_{L^2(\bR^d)}\le M'[T,V,\mu^{in}]\frac{\Dlt t^2}{\hbar}
$$
where the constant $M'$ depends on the final time $T$, on $\|V\|_{W^{4,\infty}(\bR^d)}$, and on
$$
\int_{\bR^{2d}}|p|^2\mu^{in}(dqdp)<\infty\,,
$$
since
$$
\hbar\| |q,p\ra\|_{H^1(\bR^d)}=O(|p|)\quad\text{ while }\quad\hbar^2\| |q,p\ra\|_{H^2(\bR^d)}=O(|p|^2)\,.
$$

Thus
$$
\ba
\Dist_1(\tilde W_{\hbar}(R_{\hbar}^n),\tilde W_{\hbar}(R_{\hbar}(n\Dlt t)))
\\
\le\min\left(M'[T,V,\mu^{in}]\frac{\Dlt t^2}{\hbar},D_T\Dlt t+2\sqrt{d\hbar}\left(1+\exp\left(\tfrac12t(1+\max(1,\Lip(\grad V)^2))\right)\right)\right)
\\
\le D[T,V,\mu^{in}]\left(\Dlt t+\min\left(\frac{\Dlt t^2}{\hbar},\sqrt{\hbar}\right)\right)\,,
\ea
$$
where
\be\lb{DefD[T,V,mu]}
D[T,V,\mu^{in}]:=\max\left(D_T,M'[T,V,\mu^{in}],2\sqrt{d}\left(1+\exp\left(\tfrac12T(1+\max(1,\Lip(\grad V)^2))\right)\right)\right)\,.
\ee
Optimizing in $\hbar$ leads to
$$
\Dist_1(\tilde W_{\hbar}(R_{\hbar}^n),\tilde W_{\hbar}(R_{\hbar}(n\Dlt t)))\le D[T,V,\mu^{in}](\Dlt t+\Dlt t^{2/3})
$$
corresponding to the choice $\hbar=\Dlt t^{4/3}$.
\end{proof}

\bigskip
\noindent
\textbf{Acknowledgements.} The work of Fran\c cois Golse and Thierry Paul was partly supported by LIA LYSM (co-funded by AMU, CNRS, ECM and INdAM). The work of Shi Jin was supported by NSFC grants Nos. 31571071 and 11871297.

%%%%%%%%%%%%%%%%%%%%%%%%%%%%%%%%%%%%%%%%%%%%%%%%%%%%%%%%%%%%%%%%%%%%%%%%%%%%%%%%%%%%%%%%%%%%%%%%%%%%%%%%%%%%%%%%%%%%%%%%%

\end{document}